\theoremstyle{plain}
    \newtheorem{thm}{Theorem}[section]
       \newtheorem{lem}{Lemma}[section]
       \newtheorem{cor}{Corollary}[section]
       \newtheorem{defn}{Definition}[section]
       \newtheorem{rem}{Remark}[section]
\numberwithin{equation}{section}
\begin{document}
\title{Exponential attractors with explicit fractal dimensions for retarded functional differential equations}

\author{Wenjie Hu$^{1,2}$,  Tom\'{a}s
Caraballo$^{3}$\footnote{Corresponding author.  E-mail address: caraball@us.es (Tom\'{a}s
Caraballo).}
\\
\small  1. The MOE-LCSM, School of Mathematics and Statistics,  Hunan Normal University,\\
\small Changsha, Hunan 410081, China\\
\small  2. Journal House, Hunan Normal University, Changsha, Hunan 410081, China\\
\small 3 Dpto. Ecuaciones Diferenciales y An\'{a}lisis Num\'{e}rico, Facultad de Matem\'{a}ticas,\\
\small  Universidad de Sevilla, c/ Tarfia s/n, 41012-Sevilla, Spain
}

\date {}
\maketitle

\begin{abstract}
The aim of this paper is to propose a new  method to construct exponential attractors for infinite dimensional dynamical systems in Banach spaces with explicit fractal dimension. The approach is established by combing the squeezing properties and the covering of finite subspace of Banach spaces,  which generalize the method established in Hilbert spaces. The constructed  exponential attractors possess explicit  fractal dimensions which do not depend on the entropy number but only depend on the spectrum of the linear part and Lipschitz constant of the nonlinear part. The method is especially effective for functional differential equations in Banach spaces for which sate decomposition of the linear part can be adopted to prove squeezing property. The theoretical results are applied to  the retarded functional differential equation and retarded reaction-diffusion equations.
\end{abstract}

\bigskip

{\bf Key words} {\em exponential attractors, Banach spaces, squeezing property, fractal dimension, delay}

\section{Introduction}
Exponential attractors of infinite dimensional dynamical systems are compact subsets  of the phase space with finite fractal dimensions, which are positively invariant and attract all bounded subsets at  exponential rates. It is well known that if exponential attractors  exist, then they contain global attractors. Although they may be larger than the global attractors, they are more robust  than global attractors under perturbations due to the exponential rates of convergence. Hence, they paly significant roles in investigating asymptotic behavior of infinite dimensional nonlinear dynamical systems especially for those with fast convergence rate. Eden, Foias, Nicolaenco and Temam \cite{EFNT94} first proposed the concept of exponential attractor, where the theory was established based on the squeezing property in the Hilbert spaces.

However, there are many evolution equations arising from real world modelings defined in Banach spaces, such as the  delayed differential equations \cite{JH} and delayed partial differential  equations \cite{WJ}. Therefore, one natural question arises, how to construct exponential attractors for infinite dimensional systems in Banach spaces?  Efendiev, Miranville  Zelik \cite{C17,C18} adopted a so-called smoothing property of the semigroup between two different Banach spaces to construct existence of  exponential and uniform attractors for systems in Banach spaces, which has also been widely used in estimating the fractal dimension and construct exponential attractors of various evolutions equations, see for instance, \cite{C9,C17,C20,C21,NHC21,LHSS20} and the references therein.

It should be pointed out that in the  recent work recent work \cite{C9}, the authors extended the results established in  \cite{C17,C18} to the case of time continuous asymptotically compact evolution processes in Banach spaces, which was adopted to investigate the pullback exponential attractors of nonautonomous retarded functional differential equations in the very recent works \cite{NHC21,LHSS20} and non-autonomous Chafee-Infante equation as well as non-autonomous dissipative wave equation in \cite{5}. Nevertheless, the construction in these works generally can not give explicit bound of  the fractal dimensions since it depends on the choice of another embedding space which may variety from space to space. Furthermore, the dimension estimation depends on the entropy number between two spaces which is in general quite difficult to obtain an explicit bound. Indeed, in \cite{LHSS20}, the authors pointed out  that only for scalar equations the entropy numbers of the embedding $C \hookrightarrow C^1$ are explicitly known, which yields an estimate for the fractal dimension of the exponential attractors. Hence, one naturally wonders whether we can construct  exponential attractors with explicit bounds of fractal dimensions for systems in Banach spaces that only depend on the inner characteristic of the system? In this work, we affirm this by extending Eden, Foias, Nicolaenco and Temam's work \cite{EFNT94} work to Banach spaces.

The outline of our paper is as follows. In Section 2, we recall basic notions and results from the theory of infinite dimensional dynamical systems and propose the construction procedure of exponential attractors with explicit fractal dimensions for infinite dimensional dynamical systems in Banach spaces. In Section 3, we illustrate effectiveness of our obtained theoretical results by applications to the system  generated by autonomous retarded functional  differential equations and retarded reaction-diffusion equations. At last, we summarize the paper by pointing out some potential directions for future studies.
\section{Exponential attractors}
We first introduce some preliminaries for establishing our main results, including the definitions of evolution process, exponential attractors as well as some hypothesis.
\begin{defn}\label{defn4.1}
Let $X$ be a Banch space, $\mathbb{R}^+=[0,\infty)$. A family of mappings $S(t): X \rightarrow X$ is said to be a semigroup, provided
 \begin{equation}\label{2.1}
\begin{aligned}
S(t+s) &=S(t)S(s), \quad \forall t,  s \in \mathbb{R}^+, \\
S(0) & =\mathrm{Id}_X,
\end{aligned}
\end{equation}
where $\mathrm{Id}_X: X \rightarrow X$ represents the identity map on $X$.
\end{defn}

We now give the following definition of exponential attractors.
\begin{defn}\label{defn2.1}Let $S(t)$ be a semigroup in $X$. A  non-empty compact subset $\mathcal{M}$ is called an
exponential attractor of the semigroup $S(t)$ if\\
(i) $\mathcal{M}$ is positively invariant, i.e.
$$
S(t)\mathcal{M} \subset \mathcal{M}  \quad \forall t \in \mathbb{R}^+.
$$\\
(ii) the fractal dimension $\operatorname{dim}_f(\mathcal{M})$ of $\mathcal{M}$ is bounded,
where $\operatorname{dim}_f(\mathcal{M})$ is  defined as
$$
\operatorname{dim}_f(\mathcal{M})=\lim _{\varepsilon \rightarrow 0} \frac{\ln \left(N_{\varepsilon}^X(\mathcal{M})\right)}{\ln \left(\frac{1}{\varepsilon}\right)},
$$
and $N_{\varepsilon}^X(\mathcal{M})$ denotes the minimal number of $\varepsilon$-balls in $X$ with centres in $X$ needed to cover $\mathcal{M}$.\\
(iii) $\mathcal{M}$ exponentially attracts all bounded sets, i.e. there exists a constant $\omega>0$ such that for every bounded subset $D \subset X$ and every $t \in \mathbb{R}^+$
$$
\lim _{t \rightarrow \infty} e^{-\omega t} \operatorname{dist}_X(S(t) D, \mathcal{M})=0,
$$
where $\mathrm{dist}_X(A,B)$ denotes the Hausdorff semi-distance  between $A$ and $B$, defined as
$$
\mathrm{dist}_X(A, B)=\sup _{a \in A} \inf _{b \in B} d(a, b), \quad \text { for } A, B \subseteq X.
$$
\end{defn}

For a finite dimensional subspace $F$ of a Banach space $X$, denote by $B^F_r(x)$ and $B_r(x)$ the ball in $F$ and $X$ of center $x$ and radius $r$ respectively, that is $B^F_r(x)=\{y \in F |\|y-x\|\leq r\}$ and $B_r(x)=\{y \in X|\|y-x\|\leq r\}$. For later use, we introduce the following  covering lemma of balls in finite dimensional Banach spaces, which was proved in \cite{30}.
\begin{lem}\label{lem3.1}
For every finite dimensional subspace $F$ of a Banach space $X$, we have
 \begin{equation}\label{2.2}
N\left(r_1, B_{r_2}^F\right)\leq m 2^m\left(1+\frac{r_1}{r_2}\right)^m,
\end{equation}
for all $r_1>0, r_2>0$, where $m=\operatorname{dim} F$ and $N\left(r_1, B_{r_2}^F\right)$ is the minimum number of balls needed to cover $B_{r_2}^F$ by the ball of radius $r_1$ calculated in the metric space $X$.
\end{lem}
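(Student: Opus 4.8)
The plan is to prove \eqref{2.2} by a classical metric-entropy argument of packing type, carried out entirely inside the finite-dimensional normed space $(F,\|\cdot\|)$, with the dimension $m=\dim F$ entering only through the existence of a volume that scales like the $m$-th power of the radius.

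First I would reduce the covering problem from $X$ to $F$. Since $F$ is a linear subspace of $X$ with the inherited norm, it suffices to exhibit a covering of $B^F_{r_2}$ by closed balls of radius $r_1$ whose centres lie in $F$; every such covering is admissible in the definition of $N(r_1,B^F_{r_2})$, so the number of balls used is an upper bound for the minimal covering number, and I may work throughout in $F\cong\mathbb{R}^m$. Then I would pick a maximal subset $\{x_1,\dots,x_N\}\subset B^F_{r_2}$ that is $r_1$-separated, i.e. $\|x_i-x_j\|>r_1$ for $i\neq j$; such a set is finite because $B^F_{r_2}$ is compact. Maximality forces, for every $y\in B^F_{r_2}$, the existence of an index $i$ with $\|y-x_i\|\le r_1$ (otherwise $y$ could be adjoined, contradicting maximality), so the closed balls $B^F_{r_1}(x_i)$ cover $B^F_{r_2}$ and consequently $N(r_1,B^F_{r_2})\le N$. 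It then remains to bound $N$.

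The core step is a volume comparison. Fixing a linear isomorphism $F\cong\mathbb{R}^m$, let $\mu$ be the induced Lebesgue measure; it is translation invariant and satisfies $\mu\big(B^F_{\rho}(x)\big)=\rho^{m}\mu\big(B^F_{1}(0)\big)$ for every centre $x$ and radius $\rho$. Because the centres are more than $r_1$ apart, a one-line triangle-inequality check shows that the open balls $B^F_{r_1/2}(x_i)$ are pairwise disjoint, while each $x_i\in B^F_{r_2}$ forces all of them into $B^F_{r_2+r_1/2}(0)$. Comparing total measures and cancelling $\mu(B^F_1(0))$ yields $N\,(r_1/2)^{m}\le (r_2+r_1/2)^{m}$, hence
\begin{equation*}
N\le\Big(\frac{r_2+r_1/2}{r_1/2}\Big)^{m}=\Big(1+\frac{2r_2}{r_1}\Big)^{m}.
\end{equation*}
Finally, from $1+\frac{2r_2}{r_1}\le 2\big(1+\frac{r_2}{r_1}\big)$ together with the trivial $1\le m$ I would relax this to $m\,2^{m}\big(1+\frac{r_2}{r_1}\big)^{m}$, which is the bound \eqref{2.2} (with the radius $r_1$ of the covering balls appearing in the denominator).

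I do not anticipate a genuine obstacle: once the reduction to $F$ is in place the estimate is elementary. The only care needed is in justifying that restricting centres to $F$ still gives a valid upper bound for $N(r_1,B^F_{r_2})$, and in the disjointness and containment checks for the half-radius balls. The conceptual crux worth stressing is that the whole estimate rests on the availability of a measure scaling as $\rho^{m}$, which exists precisely because $\dim F=m<\infty$; this is exactly why such a uniform covering bound cannot hold in the infinite-dimensional space $X$ itself, and why the decomposition onto finite-dimensional subspaces is indispensable for the construction that follows.
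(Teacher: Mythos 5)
Your argument is correct, but it cannot be compared line by line with the paper, because the paper does not prove Lemma \ref{lem3.1} at all: it simply quotes the result from Man\'{e} \cite{30}. Your proof is therefore a genuinely independent, self-contained route --- the classical packing/volume-comparison argument (maximal $r_1$-separated set, disjoint half-radius balls, Lebesgue measure on $F\cong\mathbb{R}^m$ with its $\rho^m$ scaling). All the steps you single out do check: restricting the centres to $F$ legitimately upper-bounds the covering number as defined in $X$; compactness of $B^F_{r_2}$ gives finiteness of the maximal separated set; the disjointness of the balls $B^F_{r_1/2}(x_i)$ and their inclusion in $B^F_{r_2+r_1/2}(0)$ are exactly as stated; and $0<\mu\big(B^F_1(0)\big)<\infty$ (equivalence of norms on $F$) justifies cancelling the measure of the unit ball. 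Note that your method actually yields the sharper bound $\big(1+2r_2/r_1\big)^m\le 2^m\big(1+r_2/r_1\big)^m$, so the factor $m$ in the lemma enters your derivation only through the trivial relaxation $1\le m$; the constant $m2^m$ is an artifact of the argument in \cite{30}, not something the volume approach needs (both versions tacitly assume $m\ge 1$, since for $m=0$ the right-hand side vanishes).

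One point you flag only parenthetically deserves emphasis: what you prove is $N\big(r_1,B^F_{r_2}\big)\le m2^m\big(1+r_2/r_1\big)^m$, whereas \eqref{2.2} as printed has $\big(1+r_1/r_2\big)^m$. Your version is the correct one. With the paper's stated convention ($r_1$ the radius of the covering balls, $r_2$ the radius of the covered ball), the right-hand side of \eqref{2.2} would stay bounded by $m2^m$ as $r_1\to 0$, which no covering number can satisfy. Moreover, the form in which the lemma is actually invoked later --- a ball of radius $M_1e^{\lambda_0}R_{\mathcal{B}}$ covered by at most $\Lambda 2^{\Lambda}\big(1+M_1/\alpha\big)^{\Lambda}$ balls of radius $\alpha e^{\lambda_0}R_{\mathcal{B}}$, see \eqref{2.10}--\eqref{2.11} and \eqref{2.19}--\eqref{2.20} --- is precisely your inequality, with the ratio (covered radius)/(covering radius). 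So \eqref{2.2} contains an inverted ratio, and your proof establishes the statement in the form the paper actually uses.
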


In order to construct the exponential attractor we need to impose  the following assumptions on the semigroup $S(t)$.

$\left(\mathcal{H}_1\right)$ For any $t\in \mathbb{R}^+$, the mapping $S(t): X\rightarrow X$ is continuous. Moreover, for the semigroup $S(t)$ there exists  a  bounded set $\mathcal{B}\subset X$ that absorbs all bounded subsets of $X$; i.e., for all bounded subsets $D \subset X$, there exists $T_{D}>0$ such that
$$
S(t) D \subset \mathcal{B}\quad \text { for all } t \geq T_{D} .
$$

$\left(\mathcal{H}_2\right)$ The bounded set $\mathcal{B} \subset X$ is positive invariant, that is, $S(t)\mathcal{B} \subseteq \mathcal{B}$ for all $t \in \mathbb{R}^+$.

$\left(\mathcal{H}_3\right)$ There is a finite dimension projection $P:X\rightarrow PX$ with a finite dimension \begin{equation}\label{2.3}
\Lambda=\dim\{PX\}
\end{equation}
and there are three positive constants $M_1, M_2, M_3$ and two constants $\lambda_0$ and $\lambda_1$ such that
 \begin{equation}\label{2.4}
\left\|PS(t)\varphi-PS(t)\psi\right\| \leq M_1e^{\lambda_0 t}\left\|\varphi-\psi\right\|
\end{equation}
and
 \begin{equation}\label{2.5}
\begin{gathered}
\left\|(I-P)S(t)\varphi-(I-P)S(t)\psi\right\|\leq (M_2e^{\lambda_1  t}+M_3e^{\lambda_0  t})\left\|\varphi-\psi\right\|
\end{gathered}
\end{equation}
for any $t\geq 0$, and $\varphi, \psi \in \mathcal{B}$.

We first construct exponential  attractors for the discrete semigroup $\{S(n) \}$.
\begin{thm}Let $\{S(n) \}$ be a discrete semigroup in $X$ and the assumptions $\left(\mathcal{H}_1\right)-\left(\mathcal{H}_3\right)$  are satisfied for discrete times $t \in \mathbb{Z}$. Moreover, assume that there exists $\alpha>0$ such that $\zeta:=\alpha e^{\lambda_0}+M_2e^{\lambda_1}+M_3e^{\lambda_0}<1$. Then, there exists an exponential attractor $\mathcal{M}$ for the semigroup $\{S(n) \}$, and the fractal dimension  is bounded  by
 \begin{equation}\label{2.6}
\begin{aligned}
\operatorname{dim}_f \mathcal{A}\leq \frac{\Lambda[\ln\Lambda +\ln(2+\frac{M_1}{\alpha })]}{-\ln \zeta}<\infty.
\end{aligned}
\end{equation}
\end{thm}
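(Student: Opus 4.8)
The plan is to reduce everything to a single discrete covering estimate and then to iterate it, following the construction of \cite{EFNT94} but replacing the Hilbert-space orthogonality by the finite-dimensional covering bound of Lemma \ref{lem3.1}. First I would prove a one-step squeezing/covering lemma: fix $r>0$ and a centre $x\in\mathcal{B}$, and for $\varphi\in \overline{B_r(x)}\cap\mathcal{B}$ split $S(1)\varphi-S(1)x=P(S(1)\varphi-S(1)x)+(I-P)(S(1)\varphi-S(1)x)$. By $(\mathcal{H}_3)$ with $t=1$ the high-mode part has norm at most $(M_2e^{\lambda_1}+M_3e^{\lambda_0})r$, while the low-mode part lies in the $\Lambda$-dimensional ball $B^{PX}_{M_1e^{\lambda_0}r}(PS(1)x)$. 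Covering that finite-dimensional ball of radius $M_1e^{\lambda_0}r$ by balls of radius $\alpha e^{\lambda_0}r$ via Lemma \ref{lem3.1} (with $m=\Lambda$) produces at most $N_*:=[\Lambda(2+\frac{M_1}{\alpha})]^{\Lambda}$ centres, and adding the two estimates shows that $S(1)(\overline{B_r(x)}\cap\mathcal{B})$ is covered by $N_*$ balls of radius $(\alpha e^{\lambda_0}+M_2e^{\lambda_1}+M_3e^{\lambda_0})r=\zeta r$. The crucial point is that the single-step contraction factor is exactly $\zeta<1$.

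Next I would iterate. Since $\mathcal{B}$ is bounded it sits in some ball $\overline{B_{r_0}(x_0)}$ with $x_0\in\mathcal{B}$, and by $(\mathcal{H}_2)$ the set $\mathcal{B}$ is positively invariant, so the reference centres produced along the iteration remain inside $\mathcal{B}$ and the estimates $(\mathcal{H}_3)$ (valid only for pairs in $\mathcal{B}$) may be reapplied. Induction on $n$, using the one-step lemma on each covering ball of $S(n)\mathcal{B}$, then yields that $S(n)\mathcal{B}$ is covered by at most $N_*^{\,n}$ balls of radius $\zeta^{n}r_0$. Letting $n\to\infty$, total boundedness of $\mathcal{A}:=\bigcap_{n\ge0}\overline{S(n)\mathcal{B}}$ at every scale together with completeness of $X$ gives that $\mathcal{A}$ is a non-empty compact positively invariant set with $N_{\zeta^{n}r_0}^{X}(\mathcal{A})\le N_*^{\,n}$ and $\operatorname{dist}_X(S(n)\mathcal{B},\mathcal{A})\le C\zeta^{n}$.

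I would then assemble the exponential attractor from the finite centre sets $E_n\subset\mathcal{B}$ of the above covers. Putting $\mathcal{M}:=\overline{\bigcup_{k\ge0}S(k)\big(\mathcal{A}\cup\bigcup_{n\ge0}E_n\big)}$, positive invariance $S(1)\mathcal{M}\subseteq\mathcal{M}$ follows at once from the continuity in $(\mathcal{H}_1)$; exponential attraction of every bounded $D$ follows by first absorbing $D$ into $\mathcal{B}$ (the time $T_D$ of $(\mathcal{H}_1)$) and then using $\operatorname{dist}_X(S(n)\mathcal{B},\mathcal{M})\le C\zeta^{n}=Ce^{-\omega n}$ with $\omega:=-\ln\zeta>0$. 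Finally, because $\mathcal{M}$ inherits the covering bound $N_{\zeta^{n}r_0}^{X}(\mathcal{M})\le C\,N_*^{\,n}$, the definition of fractal dimension gives $\operatorname{dim}_f\mathcal{M}\le \frac{\ln N_*}{-\ln\zeta}=\frac{\Lambda[\ln\Lambda+\ln(2+\frac{M_1}{\alpha})]}{-\ln\zeta}$, which is \eqref{2.6}.

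I expect the main obstacle to be making the iteration rigorous in the Banach setting. Unlike the Hilbert case one cannot use an orthogonal splitting, so the covering count must be extracted purely from Lemma \ref{lem3.1}, and one has to keep every covering centre inside the invariant absorbing set $\mathcal{B}$ so that the one-sided Lipschitz bounds $(\mathcal{H}_3)$ can be re-applied at the next step. Controlling the passage from a covering ball to its image without losing the balance between the contraction rate $\zeta$ and the number of balls $N_*$ --- in particular avoiding the diameter-versus-radius factor that would otherwise force the stronger hypothesis $2\zeta<1$ --- is the delicate part, and it is precisely the choice of the covering radius $\alpha e^{\lambda_0}r$ for the low-mode ball, tuned so that $\zeta=\alpha e^{\lambda_0}+M_2e^{\lambda_1}+M_3e^{\lambda_0}<1$, that makes the single-step estimate and the explicit dimension bound \eqref{2.6} close up.
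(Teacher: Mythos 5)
Your proposal is correct and follows essentially the same route as the paper's own proof: the same splitting of $S(1)\varphi-S(1)\psi$ via the projection $P$ from $(\mathcal{H}_3)$, the same use of Lemma \ref{lem3.1} with the tuned covering radius $\alpha e^{\lambda_0}r$ to get the one-step contraction $\zeta$, the same geometric iteration of coverings of $S(n)\mathcal{B}$, and the same assembly of $\mathcal{M}$ as the closure of the union of forward images of the covering centres, with identical arguments for invariance, compactness, the dimension bound and exponential attraction. The only cosmetic difference is that you adjoin $\mathcal{A}=\bigcap_{n\ge 0}\overline{S(n)\mathcal{B}}$ to $\mathcal{M}$, which is harmless but unnecessary, and the delicate point you flag (keeping the covering centres inside $\mathcal{B}$ so that $(\mathcal{H}_3)$ can be reapplied) is exactly the point the paper handles through its property $(W1)$.
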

\begin{proof}
\textbf{1) Covering of} $S(m)\mathcal{B}$. We   construct the covering of $U(m)\mathcal{B}$ by inductively defining a family of sets  $W^m, m\in \mathbb{N}$ that  satisfy the following properties
 \begin{equation}\label{2.7}
\left\{\begin{array}{l}
(W1) \quad W^m \subset S(m) \mathcal{B}\subset \mathcal{B},\\
(W2) \quad \sharp W^m \leq [\Lambda 2^\Lambda \left(1+\frac{M_1}{\alpha }\right)^\Lambda]^m,\\
(W3) \quad S(m) \mathcal{B}\subset \bigcup_{u \in W^m } B_{\zeta^m R_{\mathcal{B}}}(u),
\end{array}\right.
\end{equation}
where $\sharp W^m$ represents the number of elements of $W^m$.

We first consider the case $m=1$, i.e., construct a covering of the image of  $S(1)\mathcal{B}$. Since $\mathcal{B}$ is bounded by $\left(\mathcal{H}_1\right)$,  there exists a constant $R_{\mathcal{B}}$ such that  $R_{\mathcal{B}}:=\sup _{u \in \mathcal{B}}\|u\|_X$. Then for any $u_{1}\in \mathcal{B}$, we have
$\mathcal{B}\subset B_{R_{\mathcal{B}}}\left(u_{1}\right)$. For any $u \in \mathcal{B}\cap B\left(u_{1}, R_{\mathcal{B}}\right)$, it follows from $\left(\mathcal{H}_3\right)$ that
\begin{equation}\label{2.8}
\begin{gathered}
\left\|PS(1) u-P S(1) u_1\right\| \leq M_1e^{\lambda_0}R_{\mathcal{B}},
\end{gathered}
\end{equation}
and
 \begin{equation}\label{2.9}
\begin{gathered}
\left\|(I-P) S(1) u-(I-P) S(1) u_1\right\| \leq  (M_2e^{\lambda_1}+M_3e^{\lambda_0 })R_{\mathcal{B}}.
\end{gathered}
\end{equation}
By Lemma \ref{lem3.1}, we can find $y_{1}^1, \ldots, y_{1}^{n_1}\in PS(1)\mathcal{B}$ and $\alpha>0$ such that
 \begin{equation}\label{2.10}
\begin{gathered}
B_{P X}\left(P S(1) u_1, M_1e^{\lambda_0}R_{\mathcal{B}}\right) \subset \bigcup_{j=1}^{n_1} B_{PX}\left(y_{1}^j, \alpha e^{\lambda_0}R_{\mathcal{B}}\right)
\end{gathered}
\end{equation}
with
 \begin{equation}\label{2.11}
\begin{gathered}
n_1\leq \Lambda 2^\Lambda \left(1+\frac{M_1}{\alpha }\right)^\Lambda,
\end{gathered}
\end{equation}
where $\Lambda$ is the dimension of $P X$ and we have denoted by $B_{PX}(y, r)$ the ball in $P X$ of radius $r$ and center $y$.
Set
 \begin{equation}\label{2.12}
\begin{gathered}
u_{1}^j=y_{1}^j+(I-P) S(1) u_{1}
\end{gathered}
\end{equation}
for $ j=1, \ldots, n_1$ and $W^1=\{u_{1}^1, u_{1}^2,\cdots, u_{1}^{n_1}\}$. Then, for any $u \in \mathcal{B}\cap B\left(u_{1}, R_{\mathcal{B}}\right)$, there exists a $j$ such that
 \begin{equation}\label{2.13}
\begin{aligned}
\left\|S(1) u-u_{1}^j\right\|
& \leq\left\|P S(1) u-y_{1}^j\right\|+\left\|(I-P) S(1) u-(I-P) S(1) u_{1}\right\| \\
& \leq\left(\alpha e^{\lambda_0}+M_2e^{\lambda_1}+M_3e^{\lambda_0}\right)R_{\mathcal{B}}\\
& \leq\zeta_1R_{\mathcal{B}},
\end{aligned}
\end{equation}
indicating $(W3)$ is satisfied for $m=1$. Furthermore, it is clear from the definition of $W^1$ that it satisfies $(W1)$ and $(W2)$.
This completes the proof of case $m=1$.

Assume that the sets $W^l$ satisfying \eqref{2.7} have been already constructed for all $m\leq l$, i.e.,
there exists covering
 \begin{equation}\label{2.14}
S(l) \mathcal{B}\subset \bigcup_{u \in W^l} B_{\zeta^{l}R_{\mathcal{B}}}(u).
\end{equation}
We construct  in the sequel the covering of $W^{l+1}$ satisfies \eqref{2.7}.  By the semigroup property, we have
 \begin{equation}\label{2.15}
\begin{aligned}
S(l+1) \mathcal{B}& =S(1) S(l) \mathcal{B} \subset \bigcup_{u \in W^l} S(1) B_{\zeta^lR_{\mathcal{B}}}(u),
\end{aligned}
\end{equation}
that is $S(l+1) \mathcal{B}$ can be covered by $\bigcup_{u \in W^l} S(1) B_{\zeta^lR_{\mathcal{B}}}(u)$. We construct in the following a covering of $\bigcup_{u \in W^l} S(1) B_{\zeta^lR_{\mathcal{B}}}(u)$.

Let $u_l \in W^l$. It follows from the induction hypothesizes  $(W1)$ and $(W3)$ that
 \begin{equation}\label{2.16}
u_l \in S(l) \mathcal{B}\subset \bigcup_{u \in W^l} B_{\zeta^{l}R_{\mathcal{B}}}(u).
\end{equation}
 Therefore, for any $u \in \mathcal{B}\cap \bigcup_{u \in W^l} S(1) B_{\zeta^lR_{\mathcal{B}}}(u)$, it follows from $\left(\mathcal{H}_3\right)$ that
\begin{equation}\label{2.17}
\begin{gathered}
\left\|PS(1) u-P S(1) u_l\right\| \leq M_1e^{\lambda_0}\zeta^lR_{\mathcal{B}},
\end{gathered}
\end{equation}
and
 \begin{equation}\label{2.18}
\begin{gathered}
\left\|(I-P) S(1) u-(I-P) S(1) u_l\right\| \leq  (M_2e^{\lambda_1}+M_3e^{\lambda_0 })\zeta^lR_{\mathcal{B}}.
\end{gathered}
\end{equation}
By Lemma \ref{lem3.1}, we can find $y_l^1, \ldots, y_l^{n_l}\in S(1) S(l) \mathcal{B}$ such that
 \begin{equation}\label{2.19}
\begin{gathered}
B_{P X}\left(P S(1) u_l, \alpha e^{\lambda_0 }R_{\mathcal{B}}\right) \subset \bigcup_{j=1}^{n_l} B_{PX}\left(y_l^j, M_1e^{\lambda_0}\zeta^lR_{\mathcal{B}}\right)
\end{gathered}
\end{equation}
with
 \begin{equation}\label{2.20}
\begin{gathered}
n_l\leq \Lambda 2^\Lambda \left(1+\frac{M_1}{\alpha}\right)^\Lambda,
\end{gathered}
\end{equation}
where $\Lambda$ is the dimension of $P X$.
Set
 \begin{equation}\label{2.21}
\begin{gathered}
u_l^j=y_l^j+(I-P) S(1) u_l
\end{gathered}
\end{equation}
for $ j=1, \ldots, n_l$. Then, there exists a $j$ such that
 \begin{equation}\label{2.22}
\begin{aligned}
\left\|S(1) u-u_l^j\right\|
& \leq\left\|P S(1) u-y_l^j\right\|+\left\|(I-P) S(1) u-(I-P) S(1) u_l\right\| \\
& \leq\left(\alpha e^{\lambda_0}+M_2e^{\lambda_1}+M_3e^{\lambda_0}\right)\zeta^lR_{\mathcal{B}}=\zeta^{l+1}R_{\mathcal{B}}.
\end{aligned}
\end{equation}
This implying that $ \bigcup_{u \in W^l} S(1) B_{\zeta^lR_{\mathcal{B}}}(u)$ is  covered by balls with radius $\zeta^{l+1}R_{\mathcal{B}}$ and centers $\{u_l^1, u_l^2, \cdots, u_l^{n_l}\}$ and hence $(W3)$ holds. Denote the new set of centres by $W^{l+1}$. From the  induction hypothesis, we have $\sharp W^{l}\leq [\Lambda 2^\Lambda \left(1+\frac{M_1}{\alpha }\right)^\Lambda]^l$, which yields $\sharp W^{l+1} \leq n_l \sharp W^{l} \leq [\Lambda 2^\Lambda \left(1+\frac{M_1}{\alpha }\right)^\Lambda]^{l+1}$ and proves $(W2)$. By construction the set of centres $W^{l+1}$, we can see $W^{l+1} \subset S(1) S(l) \mathcal{B}=S(l+1)\mathcal{B}$,  which concludes the proof of the properties $(W1)$.

\textbf{2) Construction of random exponential attractor for $\{S(n) \}$.}
We define $E^1:=W^1$  and set
 \begin{equation}\label{2.24}
\begin{aligned}
E^{n+1}:=W^{n+1}\cup S(1) E^{n}, \quad n \in \mathbb{Z}^+.
\end{aligned}
\end{equation}
Then, if follows from the definition of the sets $E^n(k)$, the properties of the sets $W^n(k)$ and the positive invariance of the absorbing set $\mathcal{B}$ that the family of sets $E^n, n \in \mathbb{Z}^+$ satisfies\\
(E1) $\quad S(1) E^{n-1} \subset E^{n}, \quad E^n \subset S(n)\mathcal{B}$,\\
(E2) $\quad E^n=\bigcup_{i=0}^n S(l) W^{n-i}, \quad \sharp E^n \leq \sum_{i=0}^n (\Lambda 2^\Lambda \left(1+\frac{M_1}{\alpha }\right)^\Lambda)^i$,\\
(E3) $\quad S(n) \mathcal{B}\subset \bigcup_{u \in E^n} B_{\zeta^n R_{\mathcal{B}}}(u)$.

Based on the family of sets $E^n$, we define $\mathcal{M}:=\overline{\bigcup_{n \in \mathbb{Z}^+} E^n}$ and show that it yields an exponential  attractor for the semigroup $\{S(n)\}$.

\textbf{Positive invariance of the $\mathcal{M}$.} It follows from property $(E 1)$ that for all $l \in  \mathbb{Z}^+$, we have
  \begin{equation}\label{2.25}
\begin{aligned}
S(l) \bigcup_{n \in \mathbb{Z}^+} E^n &=\bigcup_{n \in \mathbb{N}^+} S(l) E^n\subset \bigcup_{n \in \mathbb{N}^+} E^{n+l}\subset \bigcup_{n \in \mathbb{N}^+} E^n.
\end{aligned}
\end{equation}
Thanks to the continuous property in $\left(\mathcal{H}_1\right)$, we can take closure in both sides of \eqref{2.25}, giving rise to
 \begin{equation}\label{2.26}
\begin{aligned}
S(l) \mathcal{M}& :=S(l) \overline{\bigcup_{n \in \mathbb{Z}^+} E^n} &=\overline{\bigcup_{n \in \mathbb{N}^+} S(l) E^n}\subset \overline{\bigcup_{n \in \mathbb{N}^+} E^{n+l}}\subset \overline{\bigcup_{n \in \mathbb{N}^+} E^n}=\mathcal{M}.
\end{aligned}
\end{equation}

\textbf{Compactness and finite dimensionality of $\mathcal{M}$.} We  prove in the sequel that  the set $\mathcal{M}$ is non-empty, precompact and of finite fractal dimension.  It follows from $(E1)$ that  for any $l \in \mathbb{Z}^+$ and $m\geq l+1$, it holds
 \begin{equation}\label{2.27}
\begin{aligned}
E^l & \subset S(l) B.
\end{aligned}
\end{equation}
Thus, for any $l \in \mathbb{Z}^+$ we have
 \begin{equation}\label{2.28}
\begin{aligned}
\bigcup_{n \in \mathbb{N}^+} E^n=\bigcup_{n=0}^l E^n \cup \bigcup_{n=l+1}^{\infty} E^n \subset \bigcup_{n=0}^l E^n \cup S(l) \mathcal{B}.
\end{aligned}
\end{equation}
Due to $\zeta<1$, for any give $\varepsilon>0$, there exists $l \in \mathbb{N}$ such that
 \begin{equation}\label{2.29}
\begin{aligned}
\zeta^{l+1} R_{\mathcal{B}} \leq \varepsilon<\zeta^{l}  R_{\mathcal{B}},
\end{aligned}
\end{equation}
which combined with the fact
 \begin{equation}\label{2.30}
\begin{aligned}
S(l) \mathcal{B}\subset \bigcup_{u \in W^l} B_{\varepsilon}(u) .
\end{aligned}
\end{equation}
indicates that the estimate of the number of $\varepsilon$-balls in $X$ needed to cover $bigcup_{n\in \mathbb{Z}^+} E^n$ is
 \begin{equation}\label{2.31}
\begin{aligned}
N_{\varepsilon}\left(\bigcup_{n\in \mathbb{Z}^+} E^n\right)& \leq \sharp\left(\bigcup_{n=0}^l E^l\right)+\sharp W^l \leq(l+1) \sharp E^l+(\Lambda 2^\Lambda \left(1+\frac{M_1}{\alpha }\right)^\Lambda)^l \\
& \leq(l+1)^2 [\Lambda 2^\Lambda \left(1+\frac{M_1}{\alpha }\right)^\Lambda]^l.
\end{aligned}
\end{equation}
 This proves the precompactness of $\bigcup_{n\in \mathbb{Z}^+} E^n$ in $X$, which directly implies the closure $\mathcal{M}:=\overline{\bigcup_{n\in \mathbb{Z}^+} E^n}$ is compact in $X$ since  $X$ is a Banach space.

It follows from \eqref{2.29} and \eqref{2.31} that   the fractal dimension of the set  $\mathcal{M}$ can be estimated by
 \begin{equation}\label{2.32}
\begin{aligned}
\operatorname{dim}_f \mathcal{M} & =\limsup_{\varepsilon \rightarrow 0} \frac{\ln N_{\varepsilon}(\mathcal{M})}{-\ln \varepsilon}\\
& \leq \limsup_{l \rightarrow \infty} \frac{\ln (l+1)^2+ \ln [\Lambda 2^\Lambda \left(1+\frac{M_1}{\alpha }\right)^\Lambda]^l}{-\ln (\zeta^l R_{\mathcal{B}})}\\
&=\frac{\Lambda[\ln\Lambda +\ln(2+\frac{M_1}{\alpha })]}{-\ln \zeta}<\infty.
\end{aligned}
\end{equation}

\textbf{3) Exponential attraction of $\mathcal{M}$.} We are left to show that the set $\mathcal{M}$ exponentially attracts all bounded subsets of $X$ at time $l \in \mathbb{Z}^+$. It follows from assumptions $\left(\mathcal{H}_1\right)$ that for any bounded subset $D \subset X$, there exists an $n_{D} \in \mathbb{Z}^+$ such that $S(l) D \subset \mathcal{B}$ for all $l \geq n_{D}$.
If $l \geq n_{D}+1$, that is $l\geq n_{D}+n_0$ with some $n_0 \in \mathbb{N}$, then
 \begin{equation}\label{2.33}
\begin{aligned}
\operatorname{dist}_{X} \left(S(l) D, \mathcal{M}\right)&=\operatorname{dist}_{X}\left(S(l) D, \overline{\bigcup_{n=0}^{\infty}E^n}\right)  \\& \leq \operatorname{dist}_{X}\left(S\left(n_0\right) S\left(l-n_0\right) D, \bigcup_{n=0}^{\infty} E^n\right) \\
& \leq \operatorname{dist}_{X}\left(S\left(n_0\right) \mathcal{B}, \bigcup_{n=0}^{\infty} E^n\right) \\
& \leq \operatorname{dist}_{X}\left(S\left(n_0\right) \mathcal{B}, E^{n_0}\right) \\
& \leq \zeta^{n_0} R_{ \mathcal{B}} \leq c e^{-\omega n}
\end{aligned}
\end{equation}
for some constants $c \geq 0$ and $\omega>0$. This completes the proof.
\end{proof}

By adopting the same procedure as the proof of Theorem 3.2 in \cite{C9}, we have  the following  results about the existence of continuous semigroup in Banach spaces.

\begin{thm}\label{thm2.1} For the above semigroup $S(t)$ on the Banach space $X$. We assume that assumptions  $\left(H_1\right)$-$\left(H_3\right)$hold.Then, there exists an exponential attractor $\mathcal{M}$ for the semigroup $\{S(t) \}$, and the fractal dimension  is bounded  by
 \begin{equation}\label{2.34}
\begin{aligned}
\operatorname{dim}_f \mathcal{A}\leq \frac{\Lambda[\ln\Lambda +\ln(2+\frac{M_1}{\alpha })]}{-\ln \zeta}<\infty.
\end{aligned}
\end{equation}
\end{thm}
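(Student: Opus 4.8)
The plan is to bootstrap the continuous statement from the discrete theorem just proved, by applying it to the time-one map and then filling in the trajectories over one time step. First I would regard the continuous semigroup at integer times, i.e. the discrete semigroup $\{S(n)\}_{n\in\mathbb{N}}$ obtained by iterating $S(1)$. Since $\left(\mathcal{H}_1\right)$--$\left(\mathcal{H}_3\right)$ hold and $\zeta<1$, the preceding theorem produces a compact set $\mathcal{M}_d=\overline{\bigcup_{n}E^n}$ that is positively invariant under $S(1)$, exponentially attracts every bounded set at integer times with rate $\zeta=e^{-\omega}$, $\omega=-\ln\zeta$, and whose fractal dimension obeys the bound in \eqref{2.6}. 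The continuous attractor is then defined by
\[
\mathcal{M}:=\overline{\bigcup_{t\in[0,1]}S(t)\mathcal{M}_d},
\]
and the whole task reduces to checking the three properties of Definition \ref{defn2.1} for this set.

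For \textbf{positive invariance}, given $t\ge 0$ I would write $t=n+s$ with $n\in\mathbb{N}$, $s\in[0,1)$, and use the semigroup law: for $\tau\in[0,1]$, $S(t)S(\tau)x=S(s)\,S(n+\tau)x$; rewriting $n+\tau$ once more as an integer plus a fractional part and invoking $S(1)\mathcal{M}_d\subset\mathcal{M}_d$ repeatedly gives $S(t)S(\tau)\mathcal{M}_d\subset S(s')\mathcal{M}_d\subset\mathcal{M}$ for the resulting $s'\in[0,1]$, whence $S(t)\mathcal{M}\subset\mathcal{M}$; the continuity in $\left(\mathcal{H}_1\right)$ then lets me pass to the closure. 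For \textbf{compactness}, I would present $\mathcal{M}$ as the image of the compact set $[0,1]\times\mathcal{M}_d$ under $(t,x)\mapsto S(t)x$; once joint continuity of this map is available, the image is compact, and as $X$ is complete the closure is compact as well.

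The \textbf{fractal dimension} is the only place a genuinely new estimate enters. The map $\Phi:[0,1]\times\mathcal{M}_d\to X$, $\Phi(t,x)=S(t)x$, is Lipschitz in the spatial variable because adding \eqref{2.4} and \eqref{2.5} gives
\[
\|S(t)\varphi-S(t)\psi\|\le\bigl(M_1e^{\lambda_0 t}+M_2e^{\lambda_1 t}+M_3e^{\lambda_0 t}\bigr)\|\varphi-\psi\|,
\]
which is bounded uniformly for $t\in[0,1]$ and $\varphi,\psi\in\mathcal{B}$. If in addition $t\mapsto S(t)\varphi$ is Lipschitz in $t$ uniformly for $\varphi\in\mathcal{B}$, then $\Phi$ is Lipschitz on $[0,1]\times\mathcal{M}_d$, and since a Lipschitz image cannot raise the fractal dimension one obtains $\operatorname{dim}_f\mathcal{M}\le\operatorname{dim}_f\bigl([0,1]\times\mathcal{M}_d\bigr)=1+\operatorname{dim}_f\mathcal{M}_d$, which is finite and yields \eqref{2.34} (the time direction contributing at most one extra dimension).

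Finally, for \textbf{exponential attraction} I would fix a bounded $D\subset X$, choose $n_D$ with $S(t)D\subset\mathcal{B}$ for $t\ge n_D$, and for $t=n+s\ge n_D+1$ factor $S(t)D=S(s)S(n)D$; combining $S(s)\mathcal{M}_d\subset\mathcal{M}$, the uniform Lipschitz bound of $S(s)$ on $\mathcal{B}$ for $s\in[0,1]$, and the discrete decay $\operatorname{dist}_X(S(n)D,\mathcal{M}_d)\le c\,\zeta^{\,n}$ propagates the rate to continuous time and gives $\operatorname{dist}_X(S(t)D,\mathcal{M})\le c'e^{-\omega t}$. The \emph{main obstacle} is exactly the Lipschitz-in-time control used in the dimension step, together with the joint continuity of $(t,x)\mapsto S(t)x$ needed for compactness: the stated hypotheses supply only continuity in $t$ and Lipschitz dependence on the data, so the time regularity must be extracted from the structure of the underlying retarded functional differential equation; once that is in place, the rest is a routine transcription of the discrete construction.
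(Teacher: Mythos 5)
Your route is exactly the one the paper intends: the paper's entire ``proof'' of Theorem \ref{thm2.1} is a one-line appeal to the procedure of Theorem 3.2 in \cite{C9}, and that procedure is precisely your bootstrap --- apply the discrete theorem to the time-one map, sweep the discrete attractor over one time step, $\mathcal{M}=\overline{\bigcup_{t\in[0,1]}S(t)\mathcal{M}_d}$, and transfer invariance, compactness, dimension and attraction. So in terms of approach you and the paper coincide; the issues below are as much defects of the paper's statement as of your argument.

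The obstacle you flag is a genuine gap, and it cannot be closed from the stated hypotheses. In fact it is slightly worse than you say: $\left(\mathcal{H}_1\right)$ asserts continuity of $S(t):X\to X$ for each \emph{fixed} $t$, i.e.\ continuity in the spatial variable only, so even the plain continuity of $t\mapsto S(t)x$ that your compactness step needs is not assumed (note that joint continuity would then follow, since \eqref{2.4}--\eqref{2.5} make $S(t)$ Lipschitz in $x$ on $\mathcal{B}$ uniformly for $t\in[0,1]$). The Lipschitz or H\"older regularity in $t$ needed for your dimension step is likewise absent; in \cite{C9} it is an explicit standing hypothesis (H\"older continuity of the semigroup in time on the absorbing set), verified in applications from the variation-of-constants formula, and at the level of an abstract semigroup it must simply be added --- your suggestion to ``extract it from the underlying retarded equation'' has no meaning here, since the abstract theorem knows nothing of any equation. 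Second, even granting Lipschitz time dependence, your own estimate gives $\operatorname{dim}_f\mathcal{M}\le 1+\operatorname{dim}_f\mathcal{M}_d$ (or $1/\theta+\operatorname{dim}_f\mathcal{M}_d$ for $\theta$-H\"older dependence), because the sweep over $[0,1]$ adds the time direction; this is strictly larger than \eqref{2.34}, which is verbatim the discrete bound \eqref{2.6}. Your parenthetical claim that this ``yields \eqref{2.34}'' therefore contradicts your own computation: what this method honestly proves is the bound with the additive $1$ (or $1/\theta$), and no sweep construction can avoid that extra term, so the bound as printed in the theorem is not obtainable by this argument.
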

\section{Applications}
In this section, we are concerned about applications of the above established theoretical results to a the retarded functional differential equation (RFDE)  and a retarded reaction-diffusion equation.
\subsection{Retarded functional differential  equation}
Consider the following nonlinear autonomous RFDE
 \begin{equation}\label{3.1}
\begin{aligned}
\dot{x}(t)=Ax(t)+bx(t-\tau)+f(x_t),\\
x_0=\phi,
\end{aligned}
\end{equation}
where $x(t)\in \mathbb{R}^n$, $A\in \mathbb{R}^{n\times n}$, $b$ is a constant, $r$ stands for the delay and $f$ is a  continuous nonlinear mapping from $X$ into $\mathbb{R}^n$.  The initial condition $\phi\in \mathcal{C}\triangleq C([-r,0],\mathbb{R}^n)$, with $\mathcal{C}$ being the Banach space of continuous functions from $[-r,0]$ to $\mathbb{R}^n$ equipped with the supremum norm  $\|\phi\|_{X}=\sup_{\theta \in[-r, 0]}|\phi(\theta)|$ for any $\phi \in X$ and $|\cdot|$ is the usual norm of $\mathbb{R}^n$.  For notation simplicity, define the linear part of \eqref{3.1} as a  linear  mapping $L$ from $X$ into $\mathbb{R}^n$ given by
 \begin{equation}\label{3.1a}
\begin{aligned}
L\phi =A\phi(0)+b\phi(\tau)
\end{aligned}
\end{equation}
for any $\phi\in X$.

Since $L$ is linear,  there exists an $n \times n$ matrix $\eta(\theta),-r \leq \theta \leq 0$, whose elements are of bounded variation, normalized so that $\eta$ is continuous from the left on $(-r, 0)$ and $\eta(0)=0$, such that,
 \begin{equation}\label{3.2}
\begin{aligned}
L \phi=\int_{-r}^0 d[\eta(\theta)] \phi(\theta), \quad \phi \in X.
\end{aligned}
\end{equation}
Denote by $u^\phi_t$ the solution of the liner part
 \begin{equation}\label{3.3}
\begin{aligned}
\dot{u}(t)=L x_t,\\
u_0=\phi.
\end{aligned}
\end{equation}
Then the solution operator defined by $u^\phi_t=S(t)\phi$  is a strongly continuous semigroup with infinitesimal generator
 \begin{equation}\label{3.4}
\begin{aligned}
A \phi & =\frac{d \phi}{d \theta},\\
\mathcal{D}(A) & =\left\{\phi \in C: \frac{d \phi}{d \theta} \in C, \frac{d \phi}{d \theta}(0)=\int_{-r}^0 d[\eta(\theta)] \phi(\theta)\right\}.
\end{aligned}
\end{equation}
Furthermore, $S(t)$ is compact for $t \geq r$. It follows from Lemma 2.1 in Chapter 7 in \cite{JH} that the operator  $A$ defined by Equation \eqref{3.4} has only point spectrum, i.e., $\sigma(A)=\mathrm{P} \sigma(A)$ and $\lambda$ is in $\sigma(A)$ if and only if $\lambda$ satisfies the characteristic equation
 \begin{equation}\label{3.5}
\begin{aligned}
\operatorname{det} \Delta(\lambda)=0, \quad \Delta(\lambda)=\lambda I-\int_{-r}^0 e^{\lambda \theta} d \eta(\theta).
\end{aligned}
\end{equation}

Denote by $\Sigma=\left\{\varrho_1>\varrho_2>\ldots>\varrho_m\right\}$ as a finite set of characteristic value of equation \eqref{3.5} with  multiplicity $n_1, n_2,\cdots$, where $\varrho_1$ is defined as
 \begin{equation}\label{3.6}
\begin{aligned}
\varrho_1=\max \left\{\operatorname{Re} \lambda: \Delta(\lambda)\xi=\left[\Delta(\lambda)=\lambda I-\int_{-r}^0 e^{\lambda \theta} d \eta(\theta)\right] \xi=0\right\}.
\end{aligned}
\end{equation}
It follows from Theorem 6.1 and Lemma 2.1 in Chapter 7 in \cite{JH} that for any given $\varrho_m<0$, $m\geq 1$, there is a
 \begin{equation}\label{3.7}
\begin{aligned}
k_m=n_1+n_2+\cdots+n_m
\end{aligned}
\end{equation}
dimensional  subspace $X^U_{k_m}$ such that
$$X=X^U_{k_m} \bigoplus X^S_{k_m}$$
is the decomposition of $X$ by $\varrho_m$. Let $P_{k_m}$ and $Q_{k_m}$ be the projection of $X$ onto $X^U_{k_m}$ and $ X^S_{k_m}$ respectively, that is $X^U_{k_m}=P_{k_m}X$, $X^S_{k_m}=(I-P_{k_m})X=Q_{k_m}X$. It follows from the definition $P_{k_m}$ and $Q_{k_m}$ that there exists a positive constant $K$ such that
\begin{equation}\label{3.8}
\begin{aligned}
\left\|S(t)Q_{k_m} x\right\| & \leq K e^{\varrho_m t}\|x\|, & & t \geq0.
\end{aligned}
\end{equation}

For the purpose of showing the squeeze property, we extend the domain of $U(t)$ to the following space of some discontinuous functions
 \begin{equation}\label{3.9}
\begin{aligned}
\hat{C}=\left\{\phi:[-r, 0] \rightarrow X ;\left.\phi\right\|_{[-r, 0)} \quad \text {is continuous and } \lim _{\theta \rightarrow 0^{-}} \phi(\theta) \in X \quad \text{exists} \right\}
\end{aligned}
\end{equation}
and introduce the following informal variation of  constant formula established in \cite{CM}
 \begin{equation}\label{3.10}
\begin{aligned}
u^\phi_t & =S(t) \phi+\int_0^t S(t-s) X_0 f(u_s) d s, \quad t \geq 0,
\end{aligned}
\end{equation}
where $X_0:[-r, 0] \rightarrow B(X, X)$ is given by $X_0(\theta)=0$ if $-r \leq \theta<0$ and $X_0(0)=I d$.

\begin{rem}
In general, the solution semigroup defined by \eqref{3.10} have no definition at discontinuous functions and the integral in the formula is undefined as an integral in the phase space. However,  if interpreted correctly, one can see that \eqref{3.10} does make sense. Details can be found in \cite{CM} Pages 144 and 145.
\end{rem}

In the reminder of this paper, we always assume that $\varrho_1<0$ and hence we have the following results by Theorem 6.2 on page 24 in \cite{JH}.
 \begin{lem}\label{lem4}If $\varrho_1<0$, then there exist positive constants $\gamma<-\varrho_1$ and $K_{0}(\gamma)$ such that
\begin{equation}\label{3.11}\|S(t)\phi\|<K_{0} \mathrm{e}^{-\gamma t}\|\phi\|\end{equation}
 for all $t \geq 0$.
\end{lem}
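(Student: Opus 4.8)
The plan is to deduce the uniform exponential decay from the spectral decomposition of $X$ relative to the rightmost characteristic root $\varrho_1$, together with the eventual compactness of $S(t)$. Since by hypothesis $\varrho_1<0$ is the maximal real part among all roots of the characteristic equation \eqref{3.5} (see \eqref{3.6}), I would first fix any $\gamma$ with $0<\gamma<-\varrho_1$, so that $\varrho_1<-\gamma<0$. The guiding idea is that the growth rate of the semigroup is dictated entirely by $\varrho_1$, and the gap between $\varrho_1$ and $-\gamma$ provides exactly the room needed to absorb the algebraic (polynomial) factors coming from the finite-dimensional generalized eigenspace.

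Next I would split $X$ using the decomposition $X=X^U_{k_1}\oplus X^S_{k_1}$ induced by $\varrho_1$, with bounded projections $P_{k_1}$ and $Q_{k_1}=I-P_{k_1}$. On the stable part, estimate \eqref{3.8} with $m=1$ gives directly $\|S(t)Q_{k_1}\phi\|\le K e^{\varrho_1 t}\|\phi\|$ for all $t\ge0$. On the complementary subspace $X^U_{k_1}$, which is finite-dimensional of dimension $n_1$ and invariant under $S(t)$, the restriction of the generator $A$ has $\varrho_1$ as its only eigenvalue; hence $S(t)|_{X^U_{k_1}}$ is a matrix exponential whose norm is bounded by $C(1+t^{n_1-1})e^{\varrho_1 t}$, and combining with the boundedness of $P_{k_1}$ yields $\|S(t)P_{k_1}\phi\|\le C'(1+t^{n_1-1})e^{\varrho_1 t}\|\phi\|$. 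Adding the two contributions gives a bound of the form $\|S(t)\phi\|\le C''\,(1+t^{n_1-1})\,e^{\varrho_1 t}\,\|\phi\|$ for all $t\ge0$.

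I would then absorb the polynomial factor into the spectral gap: writing $e^{\varrho_1 t}=e^{-\gamma t}e^{(\varrho_1+\gamma)t}$ with $\varrho_1+\gamma<0$, the function $t\mapsto(1+t^{n_1-1})e^{(\varrho_1+\gamma)t}$ is continuous on $[0,\infty)$ and tends to $0$ as $t\to\infty$, hence is bounded by a finite constant depending only on $\gamma$. Taking $K_0=K_0(\gamma)$ to be $C''$ times this bound gives $\|S(t)\phi\|\le K_0 e^{-\gamma t}\|\phi\|$, and the strict inequality stated in the lemma is recovered by enlarging $K_0$ slightly.

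The main obstacle is justifying that the exponential rate of the \emph{whole} semigroup is exactly $\varrho_1$, rather than being corrupted by essential or continuous spectrum — that is, that the spectral bound equals the growth bound. This is where the eventual compactness of $S(t)$ (compactness for $t\ge r$) is essential: it forces the essential spectral radius of $S(t)$ to vanish for $t\ge r$, so that $\sigma(S(t))\setminus\{0\}$ consists only of the eigenvalues $e^{t\lambda}$ with $\lambda\in\sigma(A)=\mathrm{P}\sigma(A)$, and the spectral mapping theorem then identifies the growth bound with $\varrho_1$. This is precisely the content of Theorem 6.2 on page 24 in \cite{JH}, which may be invoked directly; the decomposition argument above serves only to make the constant $K_0(\gamma)$ and the role of the restriction $\gamma<-\varrho_1$ explicit.
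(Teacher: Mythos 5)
Your argument is sound, but note that the paper itself offers no proof of Lemma \ref{lem4}: it is stated as an immediate consequence of Theorem 6.2 on page 24 of \cite{JH}, i.e.\ exactly the citation you fall back on in your closing paragraph. What you do differently is reconstruct that theorem from ingredients the paper already has: the decomposition $X=X^U_{k_1}\oplus X^S_{k_1}$ by $\varrho_1$, the stable-part estimate \eqref{3.8} with $m=1$, a Jordan-form bound $C(1+t^{n_1-1})e^{\varrho_1 t}$ on the finite-dimensional invariant part, and absorption of the polynomial factor into the gap $\varrho_1+\gamma<0$. What this buys is an explicit constant $K_0(\gamma)$ and a transparent explanation of why $\gamma$ must be strictly smaller than $-\varrho_1$; you also correctly isolate the one genuinely deep ingredient, namely that eventual compactness of $S(t)$ (for $t\geq r$) annihilates the essential spectrum, so that the growth bound of the semigroup coincides with the spectral bound $\varrho_1$. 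Two caveats are worth recording. First, the restriction of the generator to $X^U_{k_1}$ need not have $\varrho_1$ as its \emph{only} eigenvalue: $n_1$ is the total multiplicity of all characteristic roots whose real part equals $\varrho_1$, typically complex-conjugate pairs; your bound survives this correction, since every eigenvalue of the restriction has real part $\varrho_1$. Second, your argument is self-contained only modulo \eqref{3.8}, which the paper likewise asserts without proof and whose justification in \cite{JH} is of essentially the same depth as the lemma itself, so the decomposition route redistributes, rather than eliminates, the appeal to the cited spectral theory.
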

Define the nonlinear dynamical system generated by \eqref{3.1} by $\Phi(t)\phi=u^\phi_t$ for any $\phi\in X$. It follows from Theorem 2.2 on page 24 in \cite{JH}, $\Phi(t): X\rightarrow X$ is continuous for any $t\in \mathbb{R}^+$. In the following, we construct exponential attractors for of $\Phi$. We first show that $\Phi$ admits a positive invariant absorbing set,  i.e. $\left(\mathcal{H}_1\right)$ and $\left(\mathcal{H}_2\right)$ are satisfied. We make the following assumption on the nonlinear term $f$.

$\mathbf{Hypothesis \  A1}$ $\left\|f\left(\phi_1\right)-f\left(\phi_2\right)\right\| \leq L_f\left\|\phi_1-\phi_2\right\| \text { for any } \phi_1, \phi_2 \in X.$

\begin{thm}\label{thm3.1}
Assume that $\mathbf{Hypothesis \  A1}$ holds and $\varrho_1<0$. Choose $0<\gamma<-\varrho_1$ such that $K_0<1$ and assume that $K_0L_f-\gamma<0$, then the dynamical system $\Phi$ admits a invariant absorbing set $\mathcal{B}$ defined by
\begin{equation}\label{3.12}
\mathcal{B}=\{\phi \in C| \|\phi\|\leq \frac{1}{1-K_0}[\frac{K_0L_ff(0)}{\gamma}+\frac{1}{\gamma-K_0L_f}]\}.
\end{equation}
\end{thm}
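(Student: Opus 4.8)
The plan is to run a variation-of-constants estimate combined with a Gronwall argument, carried out directly in the phase space $C$, in the same spirit as the classical construction of absorbing sets for semilinear problems. First I would start from the informal variation of constants formula \eqref{3.10}, namely $u_t^\phi = S(t)\phi + \int_0^t S(t-s)X_0 f(u_s)\,ds$, and take the supremum norm of both sides. Applying Lemma \ref{lem4} to the linear flow gives $\|S(t)\phi\| \le K_0 e^{-\gamma t}\|\phi\|$, and the same exponential decay is invoked for the fundamental-solution kernel $S(t-s)X_0$. For the nonlinear contribution I would linearize along the trajectory using Hypothesis A1 in the form $\|f(u_s)\| \le \|f(u_s)-f(0)\| + \|f(0)\| \le L_f\|u_s\| + \|f(0)\|$.

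Combining these ingredients produces a scalar integral inequality for $y(t):=\|u_t^\phi\|$ of the shape
\[
y(t) \le K_0 e^{-\gamma t}\|\phi\| + \frac{K_0\|f(0)\|}{\gamma} + K_0 L_f \int_0^t e^{-\gamma(t-s)} y(s)\,ds .
\]
Multiplying through by $e^{\gamma t}$ and applying the integral Gronwall inequality (or, equivalently, iterating the estimate over successive unit time intervals, which is the mechanism that produces the geometric factor $(1-K_0)^{-1}$ appearing in \eqref{3.12}) yields an explicit bound of the form $y(t) \le C(\|\phi\|)\,e^{-(\gamma - K_0 L_f)t} + \rho$, where $\rho$ is precisely the radius in \eqref{3.12}.

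Here the hypothesis $K_0 L_f - \gamma < 0$ does the decisive work: it guarantees that the effective rate $\gamma - K_0 L_f$ is strictly positive, so the transient term decays and $\limsup_{t\to\infty} y(t) \le \rho$, with the rate of approach uniform over bounded sets of initial data. This gives the absorbing property $(\mathcal{H}_1)$ at once: every bounded $D\subset X$ satisfies $\Phi(t)D \subset \mathcal{B}$ once $t$ exceeds a time $T_D$ depending only on $\sup_{\phi\in D}\|\phi\|$. For the positive invariance $(\mathcal{H}_2)$ I would take $\rho$ to be the fixed point of the self-consistent estimate, so that $\|\phi\|\le\rho$ forces $\|u_t^\phi\|\le\rho$ for every $t\ge 0$; this is confirmed by a bootstrap/continuity argument, since if $\|u_t^\phi\|$ were to first reach $\rho$ at some finite time, the displayed inequality would return a value strictly below $\rho$, a contradiction. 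The assumption $K_0<1$ enters exactly here, keeping the coefficient $1-K_0$ positive.

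The step I expect to be the main obstacle is the rigorous justification of the kernel estimate for $S(t-s)X_0$. Because $X_0$ is the discontinuous fundamental matrix solution, which lives in $\hat C\setminus C$ (see \eqref{3.9} and the Remark following \eqref{3.10}), the formula \eqref{3.10} is only meaningful once interpreted correctly, and one must verify that the decay bound $\|S(t-s)X_0\| \le K_0 e^{-\gamma(t-s)}$ persists for this fundamental solution and not merely for continuous initial data. Once this estimate is secured, the remainder is the routine Gronwall computation sketched above, and matching the resulting constant to the explicit radius in \eqref{3.12} is a matter of bookkeeping.
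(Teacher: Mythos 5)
Your proposal follows essentially the same route as the paper's proof: the variation-of-constants formula \eqref{3.10}, the linear decay bound of Lemma \ref{lem4} applied also to the kernel $S(t-s)X_0$, Hypothesis A1 to bound $f$ along the trajectory, multiplication by $e^{\gamma t}$ plus Gronwall, with $K_0L_f-\gamma<0$ making the transient decay (absorption) and the choice of radius together with $K_0<1$ giving positive invariance. The only slip is bookkeeping: the asymptotic level produced by Gronwall is $\rho_0=\frac{K_0L_ff(0)}{\gamma}+\frac{1}{\gamma-K_0L_f}$, not the radius in \eqref{3.12}; the factor $\frac{1}{1-K_0}$ enters solely through the invariance requirement $K_0\rho+\rho_0\le\rho$ (the fixed point you describe), exactly as in the paper, where moreover no bootstrap/continuity argument is needed since the Gronwall bound holds unconditionally and $e^{(K_0L_f-\gamma)t}\le 1$ gives $\|u_t\|\le K_0\rho+\rho_0=\rho$ directly.
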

\begin{proof}
It follows from \eqref{3.10} that
\begin{equation}\label{3.13}
\begin{aligned}
\left\|u_t \right\|\leq &\left\|\left(S(t)\phi\right)\right\|+\|\int_{0}^{t}\left[S(t-s) X_0 f(u_s)\right]\mathrm{d}s\| \\
\leq & K_0e^{-\gamma t}\left\|\phi\right\|+K_0L_f\int_{0}^{t} e^{-\gamma(t-s) }(\|u_s\|+f(0)) \mathrm{d} s\\
\leq & K_0e^{-\gamma t}\left\|\phi\right\|+K_0L_f\int_{0}^{t} e^{-\gamma(t-s) }\|u_s\| \mathrm{d} s+\frac{K_0L_ff(0)(1-e^{-\gamma t})}{\gamma}.
\end{aligned}
\end{equation}
Multiply both sides of \eqref{3.13} by $e^{\gamma t}$ gives
\begin{equation}\label{3.14}
\begin{aligned}
 e^{\gamma t}\left\|u_t \right\|\leq & K_0 \left\|\phi\right\|+K_0L_f\int_{0}^{t} e^{\gamma s}\|u_s\| \mathrm{d} s+\frac{K_0L_ff(0)e^{\gamma t}}{\gamma}.
\end{aligned}
\end{equation}
Applying Gr\"{o}nwall's inequality yields
\begin{equation}\label{3.15}
\begin{aligned}
 e^{\gamma t}\left\|u_t \right\| \leq & K_0\left\|\phi\right\|e^{K_0L_f t}+\frac{K_0L_ff(0)e^{\gamma t}}{\gamma}+\frac{e^{(\gamma-K_0L_f) t}}{\gamma-K_0L_f},
\end{aligned}
\end{equation}
and hence
\begin{equation}\label{3.16}
\begin{aligned}
\left\|u_t \right\| \leq & K_0\left\|\phi\right\|e^{(K_0L_f-\gamma) t}+\frac{K_0L_ff(0)}{\gamma}+\frac{e^{-K_0L_f t}}{\gamma-K_0L_f}\\ \leq & K_0\left\|\phi\right\|e^{(K_0L_f-\gamma) t}+\frac{K_0L_ff(0)}{\gamma}+\frac{1}{\gamma-K_0L_f}.
\end{aligned}
\end{equation}
Therefore, in the case $K_0L_f-\gamma<0$, for any $\phi\in X$, there exists a $t_{\|\phi\|}>0$ such that
\begin{equation}\label{3.17}
\begin{aligned}
\left\|u_t \right\| \leq & \frac{1}{1-K_0}[\frac{K_0L_ff(0)}{\gamma}+\frac{1}{\gamma-K_0L_f}].
\end{aligned}
\end{equation}
That is, $\mathcal{B}$ is an absorbing set for $\Phi$. Indeed, for any bounded subset $D\subset X$, denote by $r_D=\sup _{u \in D}\|u\|_X$, if we take $T_{D}=\frac{1}{\gamma}\ln \frac{r_D\gamma (1-K_0)(\gamma-K_0L_f)}{K_0L_ff(0)(\gamma-K_0L_f)+\gamma}$, then we have
$$
\Phi(t) D \subset \mathcal{B}
$$
for all $t \geq T_{D}$.

The invariance property clearly follows since for any $\phi\in \mathcal{B}$, by \eqref{3.16} and \eqref{3.17}, we have
 \begin{equation}\label{3.18}
\begin{aligned}
\left\|\Phi(t)\phi\right\| =&\left\|u_t \right\|\leq K_0\left\|\phi\right\|e^{(K_0L_f-\gamma) t}+\frac{K_0L_ff(0)}{\gamma}+\frac{e^{-K_0L_f t}}{\gamma-K_0L_f}
 \\ \leq & (\frac{K_0}{1-K_0}+1)[\frac{K_0L_ff(0)}{\gamma}+\frac{1}{\gamma-K_0L_f}] \\ \leq & \frac{1}{1-K_0}[\frac{K_0L_ff(0)}{\gamma}+\frac{1}{\gamma-K_0L_f}].
\end{aligned}
\end{equation}
This completes the proof.
\end{proof}

Subsequently, we prove the squeeze property of $\Phi$, i.e., $\left(\mathcal{H}_3\right)$ holds.
\begin{thm}\label{thm3.2}Let $P$ be  the finite dimension projection $P_{k_m}$ be defined by \eqref{3.8}, $K, \varrho_{m},\gamma$ and $K_0$ being defined in \eqref{3.6}, \eqref{3.7} and \eqref{3.11} respectively, then we have
 \begin{equation}\label{3.19}
\left\|P \Phi(t)\varphi-P \Phi(t)\psi\right\| \leq 2e^{(L_fK_0-\gamma)t}\left\|\varphi-\psi\right\|
\end{equation}
and
 \begin{equation}\label{3.20}
\begin{gathered}
\left\|(I-P) \Phi(t)\varphi-(I-P) \Phi(t)\psi\right\|\leq (Ke^{\varrho_m t}+\frac{KL_fK_0}{-\gamma+L_f-\varrho_m} e^{(L_f-\gamma)t})\left\|\varphi-\psi\right\|
\end{gathered}
\end{equation}
for any $t\geq 0$ and $\varphi, \psi\in\mathcal{B}$.
\end{thm}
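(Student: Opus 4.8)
The plan is to set $u_t=\Phi(t)\varphi$, $v_t=\Phi(t)\psi$ and study the difference $w_t=u_t-v_t$ through the variation of constants formula \eqref{3.10} combined with the spectral splitting $X=X^U_{k_m}\oplus X^S_{k_m}$. Subtracting the two copies of \eqref{3.10} and noting that the forcing is $X_0[f(u_s)-f(v_s)]$, I obtain
$$w_t=S(t)(\varphi-\psi)+\int_0^t S(t-s)X_0[f(u_s)-f(v_s)]\,ds.$$
Both inequalities \eqref{3.19} and \eqref{3.20} are then read off from this identity after applying $P=P_{k_m}$ or $I-P=Q_{k_m}$, which is legitimate because the spectral projections commute with $S(t)$ and, together with the dichotomy bound \eqref{3.8} and Lemma \ref{lem4}, extend to the enlarged phase space $\hat C$ in which the impulsive term $X_0c$ lives, where $\|X_0c\|=|c|$ in the supremum norm.

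First I would establish a global Gr\"onwall bound on the full difference. From the identity above, Lemma \ref{lem4} and Hypothesis A1 give $\|w_t\|\le K_0e^{-\gamma t}\|\varphi-\psi\|+K_0L_f\int_0^t e^{-\gamma(t-s)}\|w_s\|\,ds$; multiplying by $e^{\gamma t}$ and invoking Gr\"onwall's inequality exactly as in the proof of Theorem \ref{thm3.1} yields $\|w_t\|\le K_0e^{(L_fK_0-\gamma)t}\|\varphi-\psi\|$, and since $K_0<1$ also the coarser $\|w_t\|\le K_0e^{(L_f-\gamma)t}\|\varphi-\psi\|$. Feeding the sharp version back into the projected identity for $Pw_t$ produces \eqref{3.19}: the linear term contributes $K_0e^{(L_fK_0-\gamma)t}\|\varphi-\psi\|$ and the convolution term a comparable amount, and using $K_0<1$ together with a bound for the spectral projection on the finite dimensional subspace $X^U_{k_m}$ these combine into the stated $2e^{(L_fK_0-\gamma)t}\|\varphi-\psi\|$.

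For the stable component I would apply $Q=Q_{k_m}$ to the identity and estimate each term with \eqref{3.8}. The linear term gives $\|S(t)Q_{k_m}(\varphi-\psi)\|\le Ke^{\varrho_m t}\|\varphi-\psi\|$, the first summand in \eqref{3.20}. For the convolution term, \eqref{3.8} together with $\|X_0[f(u_s)-f(v_s)]\|=|f(u_s)-f(v_s)|\le L_f\|w_s\|$ bounds the integrand by $Ke^{\varrho_m(t-s)}L_f\|w_s\|$; inserting the coarse bound $\|w_s\|\le K_0e^{(L_f-\gamma)s}\|\varphi-\psi\|$ and evaluating
$$\int_0^t e^{\varrho_m(t-s)}e^{(L_f-\gamma)s}\,ds=\frac{e^{(L_f-\gamma)t}-e^{\varrho_m t}}{L_f-\gamma-\varrho_m}\le\frac{e^{(L_f-\gamma)t}}{-\gamma+L_f-\varrho_m}$$
produces exactly the second summand $\frac{KL_fK_0}{-\gamma+L_f-\varrho_m}e^{(L_f-\gamma)t}\|\varphi-\psi\|$ of \eqref{3.20}.

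The main obstacle I anticipate is not the Gr\"onwall loop but the rigorous justification of the convolution estimate in the extended space: one must verify that $S(t)$, the projections $P_{k_m},Q_{k_m}$ and the dichotomy inequality \eqref{3.8} act as claimed on the discontinuous forcing $X_0c\in\hat C$, that $\|X_0c\|=|c|$, and that projection commutes with the formal integral in \eqref{3.10}; these are precisely the points for which the formalism of \cite{CM} recalled in the Remark is needed. A secondary bookkeeping issue is the sign of the denominator $-\gamma+L_f-\varrho_m$, which must be positive so that dropping the term $-e^{\varrho_m t}$ is a genuine over-estimate; this is where the structural relations among $\gamma$, $L_f$ and $\varrho_m$ enter.
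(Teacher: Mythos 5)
Your proposal is correct in substance, but it takes a genuinely different route from the paper, and in one respect a better one. The paper projects the identity \eqref{3.21} first and then runs Gr\"onwall \emph{separately on each component}: it writes integral inequalities in which the projected norms $\|Pw_s\|$ and $\|(I-P)w_s\|$ themselves appear inside the convolution integrals, and closes each with Gr\"onwall's lemma. That step is delicate, because Hypothesis A1 only gives $\|f(u^\varphi_s)-f(u^\psi_s)\|\le L_f\|w_s\|$ with the \emph{full} norm on the right; the paper silently replaces $\|w_s\|$ by the projected norm inside the integrals, which does not follow from the hypotheses. Your bootstrap --- one Gr\"onwall estimate on the full difference, $\|w_t\|\le K_0e^{(K_0L_f-\gamma)t}\|\varphi-\psi\|$, then direct evaluation of the projected convolution integrals with this a priori bound inserted --- avoids exactly this issue. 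Moreover, for the stable component your computation yields the coefficient $\frac{KL_fK_0}{-\gamma+L_f-\varrho_m}$ and rate $L_f-\gamma$, i.e.\ \eqref{3.20} verbatim, whereas the paper's own Gr\"onwall produces the variant with $L_fK_0$ in place of $L_f$ in both the rate and the denominator; since $K_0<1$ that variant has a strictly larger coefficient at $t=0$ and so does not literally imply the stated \eqref{3.20}. The trade-off: the paper's component-wise argument, were its integrand substitution justified, would give the faster tail rate $L_fK_0-\gamma$, while your route gives the weaker but honestly derived $L_f-\gamma$.

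Two bookkeeping points you should tighten. First, your treatment of \eqref{3.19} is the vaguest step: ``the linear term and a comparable convolution term combine into $2e^{(L_fK_0-\gamma)t}$'' only works under the same implicit normalization the paper uses, namely $K_0<1$ (from Theorem \ref{thm3.1}) and $K\le 1$ (assumed nowhere), so that $\|PS(t)y\|\le\|S(t)y\|+\|(I-P)S(t)y\|\le 2e^{-\gamma t}\|y\|$ and the same kernel bound holds under the integral; note also that $P$ need not be a contraction in a Banach space, so $\|PS(t)y\|\le K_0e^{-\gamma t}\|y\|$ alone is not available. Granted that normalization, your plug-in does close \emph{exactly}: $2e^{-\gamma t}\|y\|+\int_0^t 2e^{-\gamma(t-s)}L_f\,K_0e^{(K_0L_f-\gamma)s}\|y\|\,ds=2e^{-\gamma t}\|y\|+2e^{-\gamma t}\bigl(e^{K_0L_ft}-1\bigr)\|y\|=2e^{(K_0L_f-\gamma)t}\|y\|$, so spell this computation out rather than asserting the terms ``combine.'' Second, you correctly flag the sign condition $L_f-\gamma-\varrho_m>0$ and the need for the extended phase space formalism of \cite{CM} to make sense of $S(t-s)X_0$ and its interplay with $P_{k_m},Q_{k_m}$; the paper is no more rigorous than you on either point, so these caveats align your rigor with the paper's rather than falling short of it.
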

\begin{proof}
For any $\varphi, \psi\in X$, denote by $y=\varphi-\psi$ and $w_t=\Phi(t)\varphi-\Phi(t)\psi=u^\varphi_t-u^\psi_t$. Then it follows from \eqref{3.10} that
 \begin{equation}\label{3.21}
\begin{aligned}
w_t& =S(t) y+\int_0^t S(t-s) X_0 [f(u^\varphi_s)-f(u^\psi_s)]d s, \quad t \geq 0.
\end{aligned}
\end{equation}
Take projection on $I-P$ on both sides of \eqref{3.21} leads to
\begin{equation}\label{3.22}
\begin{aligned}
\|(I-P)w_t\|_X=&\|(I-P)S(t)y+\int_0^t (I-P)S(t-s) X_0 [f(u^\varphi_s)-f(u^\psi_s)]d s\|_X\\
\leq & K e^{\varrho_m t}\|y\|_X +L_fK_0 \int_0^t  e^{-\gamma(t-s)}\|(I-P)w_s\|_Xd s.
\end{aligned}
\end{equation}
Multiply both sides of \eqref{3.22} by $e^{\gamma t}$ gives
\begin{equation}\label{3,23}
\begin{aligned}
e^{\gamma t}\|(I-P)w_t\|_X \leq & Ke^{(\varrho_m+\gamma) t}\|y\|_X +L_fK_0 \int_0^t  e^{\gamma s}\|(I-P)w_t\|_Xd s.
\end{aligned}
\end{equation}
By applying the Gronwall inequality, we have
\begin{equation}\label{3.24}
\begin{aligned}
e^{\gamma t}\|(I-P)w_t\|_X \leq & \|y\|[Ke^{(\varrho_m+\gamma) t} +\frac{KL_fK_0 }{\varrho_m+\gamma-L_fK_0}(e^{(\varrho_m+\gamma) t}-e^{L_fK_0})],
\end{aligned}
\end{equation}
indicating that
\begin{equation}\label{3.25}
\begin{aligned}
\|(I-P)w_t\|_X \leq & \|y\|[Ke^{\varrho_m t} +\frac{KL_fK_0}{\varrho_m+\gamma-L_fK_0}(e^{\varrho_m  t}-e^{(L_fK_0-\gamma)t})]\\
 \leq & \|y\|[Ke^{\varrho_m t} +\frac{KL_fK_0 }{-\gamma+L_fK_0-\varrho_m}e^{(L_fK_0-\gamma)t}].
\end{aligned}
\end{equation}
Hence, the second part holds with $\lambda_0=L_fK_0-\gamma$, $\lambda_1=\varrho_m$, $M_2=K$ and $M_3=\frac{KL_fK_0 }{-\varrho_m-\gamma+L_fK_0}$.

Subsequently, we prove the first part. Since $S(t)y=PS(t)y+(I-P)S(t)y$, we have
\begin{equation}\label{3.26}
\begin{aligned}
\|PS(t)y\|_X\leq &\|S(t)y\|+\|(I-P)S(t)y\|_X.
\end{aligned}
\end{equation}
Take projection of $P$ on both sides of \eqref{3.10} and take into account of \eqref{3.26} gives
\begin{equation}\label{3.27}
\begin{aligned}
\|Pw_t\|_X=&\|S(t)y\|+\|(I-P)S(t)y\|+\int_0^t \|PS(t-s) X_0 [f(u^\varphi_s)-f(u^\psi_s)]d s\|_X\\
\leq &( e^{-\gamma t}+e^{\varrho_mt})\|y\|+L_fK_0 \int_0^t  e^{-\gamma(t-s)}\|Pw_t\|_Xd s\\
\leq &2e^{-\gamma t}\|y\|+L_fK_0 \int_0^t  e^{-\gamma (t-s)}\|Pw_t\|_Xd s.
\end{aligned}
\end{equation}
Multiply both sides of \eqref{3.28} by $e^{\gamma t}$ gives
\begin{equation}\label{3.28}
\begin{aligned}
e^{\gamma t}\|Pw_t\|_X \leq & 2\|y\| +L_fK_0\int_0^t  e^{\gamma s}\|Pw_t\|_Xd s.
\end{aligned}
\end{equation}
By applying the Gronwall inequality, we have
\begin{equation}\label{3.29}
\begin{aligned}
e^{\gamma t}\|Pw_t\|_X \leq & 2\|y\|e^{L_fK_0 t},
\end{aligned}
\end{equation}
indicating that
\begin{equation}\label{3.30}
\begin{aligned}
\|Pw_t\|_X \leq & 2\|y\|e^{(L_fK_0-\gamma) t}.
\end{aligned}
\end{equation}
Hence, the first part holds by taking $M_1=2$ and $\lambda_0=L_fK_0-\gamma$.
\end{proof}

 Hence, it follows from Theorem \ref{thm2.1} that we have the following results about existence of an exponential  attractor $\mathcal{M}$ for the nonlinear dynamical system $\Phi$ generated by \eqref{3.1}.

\begin{thm}\label{thm3.3}
Let $k_m, \varrho_{1}, \varrho_{m},\gamma$ and $K$ be  defined in \eqref{3.8},  \eqref{3.6} and \eqref{3.7} respectively,  $P$ be  the finite dimension projection $P_{k_m}$ be defined by \eqref{3.11}. Assume that the conditions of Theorem \ref{thm3.1} are satisfied. Moreover, assume   there exists $\alpha>0$ such that $\zeta:=\alpha e^{(L_fK_0-\gamma)}+Ke^{\varrho_m}+\frac{KL_fK_0 }{-\gamma+L_fK_0-\varrho_m}e^{(L_fK_0-\gamma)}<1$, then, \eqref{3.1} admits an exponential  attractor $\mathcal{M}$ of which the fractal dimension has an upper bound
 \begin{equation}\label{3.31}
\operatorname{dim}_f \mathcal{M}\leq \frac{k_m[\ln k_m +\ln(2+\frac{2}{\alpha })]}{-\ln \zeta}<\infty.
\end{equation}
\end{thm}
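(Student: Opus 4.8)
The plan is to read Theorem \ref{thm3.3} as a direct application of the abstract continuous-time result Theorem \ref{thm2.1} to the semigroup $\Phi$ generated by \eqref{3.1}. Almost all of the analytic content has already been isolated in Theorems \ref{thm3.1} and \ref{thm3.2}, so the only remaining task is to verify that the three structural hypotheses $(\mathcal{H}_1)$--$(\mathcal{H}_3)$ hold for $\Phi$ on the absorbing set $\mathcal{B}$, and then to match the constants produced there against the dimension formula \eqref{2.34}.

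First I would record $(\mathcal{H}_1)$ and $(\mathcal{H}_2)$. The continuity of $\Phi(t):X\to X$ for every $t\ge 0$ is Theorem 2.2 in \cite{JH}, and Theorem \ref{thm3.1}—applicable because the standing hypotheses of Theorem \ref{thm3.3} explicitly include those of Theorem \ref{thm3.1}—produces the positively invariant absorbing set $\mathcal{B}$ of \eqref{3.12}. Thus $(\mathcal{H}_1)$ and $(\mathcal{H}_2)$ hold with this $\mathcal{B}$.

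Next I would supply $(\mathcal{H}_3)$ from Theorem \ref{thm3.2}, taking $P=P_{k_m}$ to be the spectral projection onto $X^U_{k_m}$, so that $\Lambda=\dim PX=k_m$. Comparing \eqref{3.19} with \eqref{2.4} yields $M_1=2$ and $\lambda_0=L_fK_0-\gamma$, while comparing the estimate \eqref{3.25} established in the proof of Theorem \ref{thm3.2} with \eqref{2.5} yields $M_2=K$, $\lambda_1=\varrho_m$ and $M_3=\frac{KL_fK_0}{-\gamma+L_fK_0-\varrho_m}$. With these substitutions the abstract squeezing constant $\zeta=\alpha e^{\lambda_0}+M_2e^{\lambda_1}+M_3e^{\lambda_0}$ becomes term-for-term the quantity assumed strictly less than one in the statement, so the hypothesis $\zeta<1$ of Theorem \ref{thm2.1} is exactly the standing assumption of Theorem \ref{thm3.3}.

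With $(\mathcal{H}_1)$--$(\mathcal{H}_3)$ and $\zeta<1$ in hand, Theorem \ref{thm2.1} produces the exponential attractor $\mathcal{M}$ for $\{\Phi(t)\}$, and substituting $\Lambda=k_m$ and $M_1=2$ into the bound \eqref{2.34} turns it into \eqref{3.31}, which completes the argument. I do not expect a genuine obstacle, since the covering and attraction construction is carried out once and for all in the proofs of the discrete and continuous abstract theorems; the only points requiring care are bookkeeping. Specifically, one must choose the decomposition index $m$ so that $-\gamma+L_fK_0-\varrho_m>0$, which is what makes $M_3$ a legitimate positive constant in \eqref{2.5}, and one must use the same $\alpha$ in the covering step (furnished by Lemma \ref{lem3.1}) as the $\alpha$ appearing in the hypothesis $\zeta<1$. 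Once these identifications are pinned down, the conclusion is immediate.
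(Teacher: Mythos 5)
Your proposal is correct and matches the paper's own (largely implicit) argument: the paper likewise proves Theorem \ref{thm3.3} by citing Theorem \ref{thm3.1} for $(\mathcal{H}_1)$--$(\mathcal{H}_2)$, Theorem \ref{thm3.2} for $(\mathcal{H}_3)$ with exactly the constants $\Lambda=k_m$, $M_1=2$, $\lambda_0=L_fK_0-\gamma$, $M_2=K$, $\lambda_1=\varrho_m$, $M_3=\frac{KL_fK_0}{-\gamma+L_fK_0-\varrho_m}$, and then invoking the abstract Theorem \ref{thm2.1} to read off the bound \eqref{3.31}. Your bookkeeping remarks (matching the $\alpha$'s and ensuring $-\gamma+L_fK_0-\varrho_m>0$) are sound and, if anything, make explicit details the paper leaves unstated.
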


\begin{rem}\label{rem5.2}
Since $\varrho_1$ and $\varrho_m$ represent the first and the $m$-th eigenvalues the linear part $L$ of Eq. \eqref{3.1}, which contains the delay effect, we can see the Hausdorff dimension of exponential attractor $\mathcal{M}$ of Eq. \eqref{3.1} depend on the time delay via the distribution of eigenvalues of the linear part $L$ of Eq. \eqref{3.1}. Furthermore, it follows from \eqref{3.31} that the fractal dimension depends on the constants of exponential dichotomy, the Lipschitz constant of the nonlinear term and the spectrum gap of the liner part $L$, indicating that the fractal dimension of global attractor $\mathcal{M}$ is very flexible to be tuned by a variety of parameters.
\end{rem}

We have the following special case about the  fractal dimension of global attractor $\mathcal{M}$ in the case $m=1$.

\begin{cor}\label{cor3.1}
Let $k_m, K_0, \varrho_{m},\gamma$ and $K$ be  defined in \eqref{3.8},  \eqref{3.6} and \eqref{3.7} respectively,  $P$ be  the finite dimension projection $P_{k_m}$ be defined by \eqref{3.11}. Assume that the conditions of Theorem \ref{thm3.1} are satisfied and $k_1=1$. Moreover, assume   there exists $\alpha>0$ such that $\zeta:=\alpha e^{(L_fK_0-\gamma)}+Ke^{\varrho_1}+\frac{KL_fK_0 }{-\gamma+L_fK_0-\varrho_1}e^{(L_fK_0-\gamma)}<1$, then, \eqref{3.1} admits an exponential  attractor $\mathcal{M}$ of which the fractal dimension has an upper bound
 \begin{equation}\label{3.34g}
\operatorname{dim}_f \mathcal{M}\leq \frac{\ln(2+\frac{2}{\alpha })}{-\ln [(\alpha+K) e^{(L_fK_0+\varrho_1)}+Ke^{\varrho_1}]}<\infty.
\end{equation}
\end{cor}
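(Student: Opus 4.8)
The plan is to derive Corollary \ref{cor3.1} as a direct specialization of Theorem \ref{thm3.3}, choosing the spectral index $m=1$ together with the boundary decay exponent $\gamma=-\varrho_1$. Since the absorbing-set hypothesis (Theorem \ref{thm3.1}) and the squeezing estimates (Theorem \ref{thm3.2}, giving $M_1=2$, $\lambda_0=L_fK_0-\gamma$, $M_2=K$, $\lambda_1=\varrho_m$, $M_3=\frac{KL_fK_0}{-\gamma+L_fK_0-\varrho_m}$) are already established, the proof reduces to substituting these constants into the general bound \eqref{3.31} and simplifying.

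First I would dispose of the numerator. The assumption $k_1=1$ forces $\Lambda=k_1=1$, so $\ln k_1=0$ and the factor $k_m[\ln k_m+\ln(2+\frac{2}{\alpha})]$ in \eqref{3.31} collapses to $\ln(2+\frac{2}{\alpha})$, which is already the numerator of \eqref{3.34g}. Next I would treat the denominator $-\ln\zeta$. Setting $m=1$ in the definition of $\zeta$ from Theorem \ref{thm3.3} and then evaluating at $\gamma=-\varrho_1$ produces two effects: the exponent $L_fK_0-\gamma$ becomes $L_fK_0+\varrho_1$, and the coefficient of the third term collapses because its denominator $-\gamma+L_fK_0-\varrho_1$ reduces to $L_fK_0$, whence $\frac{KL_fK_0}{L_fK_0}=K$. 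Thus the first and third terms merge into $(\alpha+K)e^{L_fK_0+\varrho_1}$ and
\[
\zeta=(\alpha+K)e^{L_fK_0+\varrho_1}+Ke^{\varrho_1},
\]
which is exactly the argument of the logarithm in \eqref{3.34g}; combining numerator and denominator then yields the stated bound.

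The one delicate point is the admissibility of the boundary exponent $\gamma=-\varrho_1$, since Lemma \ref{lem4} furnishes the dichotomy estimate only for $\gamma<-\varrho_1$ strictly. The hard part will be justifying this boundary value, and here I expect the standing hypothesis $k_1=1$ to be exactly what resolves it: with a simple dominant characteristic root the linear flow decays at the sharp rate $\varrho_1$ with a finite constant, so no polynomial correction forces a strict inequality. Should one prefer to stay within Lemma \ref{lem4}, I would instead pass to the limit $\gamma\to(-\varrho_1)^-$, noting that $\zeta(\gamma)$ and the whole fraction in \eqref{3.31} are continuous near $-\varrho_1$ (the denominator $-\gamma+L_fK_0-\varrho_1\to L_fK_0\neq0$ stays bounded away from zero), and recover \eqref{3.34g} as the infimum of the admissible bounds. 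In either route I must finally check that the constraints $K_0<1$, $L_fK_0-\gamma<0$ and $\zeta<1$ remain compatible at this value, which they are precisely because $L_fK_0+\varrho_1<0$ renders both exponentials in $\zeta$ smaller than one.
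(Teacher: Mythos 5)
Your proposal is correct and is essentially the paper's own (implicit) argument: the paper offers no separate proof of Corollary~\ref{cor3.1}, presenting it as a direct specialization of Theorem~\ref{thm3.3} with $m=1$, $k_1=1$ (hence $\Lambda=1$, $M_1=2$, collapsing the numerator to $\ln(2+\tfrac{2}{\alpha})$) and with $\gamma=-\varrho_1$ (collapsing $\frac{KL_fK_0}{-\gamma+L_fK_0-\varrho_1}$ to $K$ and merging the first and third terms of $\zeta$ into $(\alpha+K)e^{L_fK_0+\varrho_1}$), which is exactly your substitution. Your additional justification of the boundary value $\gamma=-\varrho_1$ — that a simple dominant characteristic root yields decay at the sharp rate $\varrho_1$ with a finite constant, so the strict inequality $\gamma<-\varrho_1$ of Lemma~\ref{lem4} can be relaxed — addresses a point the paper silently glosses over (its hypothesis is phrased with a generic $\gamma$ while its bound is evaluated at $\gamma=-\varrho_1$), and is a sound strengthening rather than a deviation.
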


\subsection{Retarded reaction-diffusion equation}
This subsection is devoted to the existence of exponential attractors  for  retarded reaction-diffusion equations. We  consider the following autonomous equation on bounded domain with Dirichlet boundary condition
 \begin{equation}\label{5.1}
 \left\{\begin{array}{l}
\frac{\partial}{\partial t} u(x, t)  =\frac{\partial^2}{\partial x^2} u(x, t)-au(x, t)-bu(x, t-r)+f(u(x, t-r)), 0 \leq x \leq \pi, t \geq 0, \\ u(0, t)  =u(\pi, t)=0, t \geq 0, \\ u(x, t) =\phi(t)(x), 0 \leq x \leq \pi,-r \leq t \leq 0.
\end{array}\right.
\end{equation}
where $a$, $b$ and $r$ are positive constants. Denote by $H=L^2(0, \pi)$ with inner product $(\xi,\eta)=\int_0^\pi \xi(x)\eta(x)dx$, norm $\|\xi\|_H=[\int_0^\pi \xi^2(x)dx]^{1/2}$ for any $\xi, \eta\in H$ and $X=C([-r, 0],H)$ the continuous function from $[-r, 0]$ to $H$ endowed with the supreme norm $\|\phi\|_{X}=\sup_{\theta \in[-r, 0]}\|\phi(\theta)\|_H$ for any $\phi \in X$. In order to set the solution in the abstract semigroup framework, We define $A: H\rightarrow H$ by
 \begin{equation}\label{5.3}
\begin{aligned}
Ay=\ddot{y}
\end{aligned}
\end{equation}
with domain $\operatorname{Dom}\left(A\right)=\left\{y \in C^2([0, \pi]) ; y(0)=y(\pi)=0\right\}$, $L: X\rightarrow H$ by
 \begin{equation}\label{5.2}
\begin{aligned}
L\phi \triangleq -a\phi(0)-b\phi(-r)
\end{aligned}
\end{equation}
for any $\phi\in C$ and
$A_U: X\rightarrow X$ by
 \begin{equation}\label{5.4}
\begin{aligned}
& A_U\phi=A\phi(0)+L\phi\\
\end{aligned}
\end{equation}
for any $\phi\in X$.
It is well known that $A-aI$ generates an analytic compact semigroup $\{T(t)\}_{t \geq 0}$ on $H$ and  \cite{WJ} that $A_U$ generates a semigroup $\{U(t)\}_{t \geq 0}$. Moreover, we also assume that $f$ satisfies the Lipschitz condition $\mathbf{Hypothesis\  A2}$.

It follows from \cite{WJ} Theorem 2.6 that \eqref{5.1} admits a global solution $u^\phi(\cdot):[-r, \infty] \rightarrow H$ such that $u(t)^\phi=\phi(t)$ for $t\in [-r,0]$ and
 \begin{equation}\label{5.5}
\begin{aligned}
u^\phi(t)=T(t) \phi(0)+\int_0^t T(t-s)\left[L\left(u_s^\phi\right)+f\left(u_s^\phi\right)\right] d s,
\end{aligned}
\end{equation}
for $t>0$.

Define $\Phi: \mathbb{R}\times X\rightarrow X$ by $\Phi(t)\phi=u^\phi_t(\cdot)$, then it generates a infinite dimensional dynamical system due to the uniqueness of the solution. The existence of attractor for semilinear or nonlinear partial functional differential equations including \eqref{5.1} as special case have been reported in much literature. See, for instance, \cite{YY} tackle the autonomous case with a nondensely defined linear part. Apparently, \eqref{5.1} satisfies other assumptions in \cite{YY} and hence it follows from Proposition 3.1 and Theorem 3.1 in \cite{YY} that \eqref{5.1} admits a global attractor, implying the existence of positive invariant absorbing set which is stated in the following.
\begin{lem}\label{lem5.1}
Assume that $\mathbf{Hypothesis\  A2}$ holds. Then, for any $\phi \in X$, there exists a constant $\gamma>a$ such that the integral solution $u^\phi_t(\cdot)$ of Eq. \eqref{5.1} satisfies the following inequality
 \begin{equation}\label{5.6}
\begin{aligned}
\left\|x_t\right\| \leq \frac{c_1 \mathrm{e}^{\gamma r}}{a-L_f \mathrm{e}^{\gamma r}}+\mathrm{e}^{\gamma r}\left(\|\phi\|-\frac{c_1}{a-L_f \mathrm{e}^{\gamma r}}\right) \mathrm{e}^{\left( L_f e^{\gamma r}-a\right) t}, \quad t \geq 0,
\end{aligned}
\end{equation}
where $c_1=\|f(\mathbf{0})\|$, $a \neq L_f e ^{\gamma r}$. If $a>L_f e^{\gamma r}$, then Eq. \eqref{5.1} has a nonempty global attractor $\mathcal{B}$.
\end{lem}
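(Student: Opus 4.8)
The plan is to reproduce, in the parabolic setting, the integrating-factor-plus-Gronwall scheme used in the proof of Theorem \ref{thm3.1}, now applied to the variation-of-constants formula \eqref{5.5} and to the decay of the analytic semigroup $T(t)$. Since $A-aI$ generates $\{T(t)\}$ and the Dirichlet Laplacian $A$ on $(0,\pi)$ has spectrum $\{-n^2:n\ge 1\}$, the semigroup satisfies $\|T(t)\|_{\mathcal L(H)}\le e^{-at}$ for every $t\ge 0$ (in fact the sharper rate $e^{-(a+1)t}$ holds, but $e^{-at}$ is all that is needed). First I would take $H$-norms in \eqref{5.5}, use this decay bound together with \textbf{Hypothesis A2} and the linear character of the delayed terms in the form $\|L(u^\phi_s)+f(u^\phi_s)\|_H\le L_f\|u^\phi_s\|_X+c_1$ with $c_1=\|f(\mathbf 0)\|$ (here $a$ is the decay rate of $\{T(t)\}$ and $L_f$ the Lipschitz constant of the delayed reaction term), and bound $\|\phi(0)\|_H\le\|\phi\|_X$, to obtain the pointwise inequality
\[
\|u^\phi(t)\|_H\le e^{-at}\|\phi\|_X+\int_0^t e^{-a(t-s)}\bigl(L_f\|u^\phi_s\|_X+c_1\bigr)\,ds,\qquad t\ge 0.
\]

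The core of the argument, and the step I expect to be the main obstacle, is closing this pointwise-in-$H$ estimate into a self-contained estimate for the history sup-norm $\|u^\phi_t\|_X=\sup_{\theta\in[-r,0]}\|u^\phi(t+\theta)\|_H$, since it is here that the delay enters and the constants $e^{\gamma r}$ and $a-L_f e^{\gamma r}$ appearing in \eqref{5.6} are generated. I would fix a weight $\gamma>a$ and introduce $\Psi(t)=\sup_{-r\le\sigma\le t}e^{\gamma\sigma}\|u^\phi(\sigma)\|_H$; the elementary bound $\|u^\phi_s\|_X\le e^{\gamma r}e^{-\gamma s}\Psi(s)$ together with $\gamma>a$ (which keeps the resulting time integral convergent) produces exactly the factor $e^{\gamma r}$ in front of $L_f$ and of $c_1$. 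Feeding this back and applying a Gronwall/Halanay-type comparison then yields the explicit bound \eqref{5.6}, whose effective decay rate is $a-L_f e^{\gamma r}$ and whose asymptotic level is $c_1 e^{\gamma r}/(a-L_f e^{\gamma r})$. The requirement $\gamma>a$ is precisely what legitimizes the comparison, and checking that $\gamma>a$ together with the admissibility condition $a\neq L_f e^{\gamma r}$ (and, for the next step, $a>L_f e^{\gamma r}$) can be met simultaneously is the compatibility constraint that forces $L_f$ and $r$ to be suitably small.

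Once \eqref{5.6} is in hand, the absorbing-set conclusion is immediate: when $a>L_f e^{\gamma r}$ the exponential $e^{(L_f e^{\gamma r}-a)t}$ decays, so for every bounded set of initial data the trajectories enter, after a finite time depending only on the size of the data, the fixed ball $\{\psi\in X:\|\psi\|_X\le\rho\}$ with $\rho$ slightly larger than $c_1 e^{\gamma r}/(a-L_f e^{\gamma r})$, and positive invariance of a suitably enlarged ball follows from the same inequality exactly as in Theorem \ref{thm3.1}. Finally, to upgrade the bounded absorbing set to a nonempty global attractor $\mathcal B$ I would invoke the compactness of the analytic semigroup $T(t)$ (reinforced by the smoothing of the delay after time $r$), which gives the asymptotic compactness of $\Phi$; together with the bounded absorbing set this is precisely the hypothesis set of Proposition 3.1 and Theorem 3.1 in \cite{YY}, from which the existence of $\mathcal B$ follows.
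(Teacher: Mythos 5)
Your proposal cannot be matched line-by-line against the paper's argument for a simple reason: the paper does not prove Lemma \ref{lem5.1} at all. The lemma is imported from \cite{YY} (``it follows from Proposition 3.1 and Theorem 3.1 in \cite{YY}\dots''), right down to the notation $\|x_t\|$ left over from that reference. So your blind reconstruction is a genuinely different route: it supplies the self-contained derivation that the paper delegates to a citation, and it does work. With $\|T(t)\|_{\mathcal{L}(H)}\le e^{-at}$, your pointwise estimate, the weighted supremum $\Psi(t)=\sup_{-r\le\sigma\le t}e^{\gamma\sigma}\|u^\phi(\sigma)\|_H$ and the bound $\|u^\phi_s\|_X\le e^{\gamma r}e^{-\gamma s}\Psi(s)$, one checks that $\Psi(t)\le \tilde h(t):=e^{(\gamma-a)t}\|\phi\|_X+\int_0^t e^{(\gamma-a)(t-s)}\bigl(L_fe^{\gamma r}\Psi(s)+c_1e^{\gamma s}\bigr)\,ds$, because $\tilde h$ is nondecreasing when $\gamma\ge a$ and therefore dominates the running supremum; Gronwall applied to $\tilde h$ then gives $\|u^\phi_t\|_X\le e^{\gamma r}e^{-\gamma t}\Psi(t)\le \frac{c_1e^{\gamma r}}{a-L_fe^{\gamma r}}+e^{\gamma r}\bigl(\|\phi\|_X-\frac{c_1}{a-L_fe^{\gamma r}}\bigr)e^{(L_fe^{\gamma r}-a)t}$, which is \eqref{5.6} verbatim (using $a\neq L_fe^{\gamma r}$), and the absorbing-ball and attractor conclusions follow as you say. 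What your route buys is an explicit proof exhibiting how each constant in \eqref{5.6} arises, in the same spirit as the paper's own proof of Theorem \ref{thm3.1} but correctly adapted to the fact that \eqref{5.5} is an identity in $H$ rather than in the phase space $X$; what the paper's route buys is brevity, at the price of the hypotheses of \cite{YY} being checked only loosely.

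Two caveats, neither fatal. First, your Lipschitz bound $\|L(u^\phi_s)+f(u^\phi_s)\|_H\le L_f\|u^\phi_s\|_X+c_1$ silently drops the linear delayed term: by \eqref{5.2}, $L\phi=-a\phi(0)-b\phi(-r)$, while $T(t)$ is generated by $A-aI$, so either $-a\phi(0)$ is counted twice or the constant multiplying $\|u^\phi_s\|_X$ should be $b+L_f$ rather than $L_f$. This defect is already present in the paper's statement of the lemma (which nowhere involves $b$), so your version is faithful to \eqref{5.6} as written, but a clean proof should absorb $-a\phi(0)$ into the semigroup and read $L_f$ as the Lipschitz constant of $\phi\mapsto -b\phi(-r)+f(\phi(-r))$. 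Second, your stated reason for requiring $\gamma>a$ (``keeps the resulting time integral convergent'') is not the operative one: the integrals converge for every $\gamma>0$; what $\gamma\ge a$ actually provides is the monotonicity of the comparison function noted above, and since $e^{\gamma r}$ enters the smallness condition $a>L_fe^{\gamma r}$, one in fact wants $\gamma$ as close to $a$ as possible.
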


We now construct exponential attractor based on the global attractor $\mathcal{B}$ obtained in Lemma \ref{lem5.1}. Since $\mathcal{B}$ is positive invariant and absorbing, that is $\left(\mathcal{H}_1\right)$ and $\left(\mathcal{H}_2\right)$ are satisfied. We only need to estimate the dimensions of the global attractor botained in Lemma \ref{lem5.1}. We first introduce the following state decompose results of the linear part $A_U$ of \eqref{5.4} established in \cite{WJ}. It follows from  \cite{WJ} that the characteristic values of the linear part  $A_U$ are the roots of the following characteristic equation
 \begin{equation}\label{5.7}
\begin{aligned}
\Delta(\lambda)\xi=\left[A-\left(\lambda+a+b e^{-\lambda r}\right) I\right] \xi=0.
\end{aligned}
\end{equation}
Since $A_U$ is compact, it follows from Theorem 1.2 (i) in \cite{WJ} that the spectrum of $A_U$  are point spectra, which we denote by $\varrho_1>\varrho_2>\cdots$ with multiplicity $n_1, n_2,\cdots$, where $\varrho_1$ is defined as
 \begin{equation}\label{5.8}
\begin{aligned}
\varrho_1=\max \left\{\operatorname{Re} \lambda: \Delta(\lambda)\xi=\left[A-\left(\lambda+a+b e^{-\lambda r}\right) I\right] \xi=0\right\}.
\end{aligned}
\end{equation}
In the following, we always assume that $b-a<1$ and it follows from Lemma 1.13 on P73 in \cite{WJ} that if $a>0$, $b>0$ and $b-a<1$, then $\varrho_1<0$. For any given $\varrho_m<0$, $m\geq 1$, there is a
 \begin{equation}\label{5.8b}
\begin{aligned}
k_m=n_1+n_2+\cdots+n_m
\end{aligned}
\end{equation}
dimensional  subspace $X^U_{k_m}$ such that
$$X=X^U_{k_m} \bigoplus X^S_{k_m}$$
is the decomposition of $X$ by $\varrho_m$. Let $P_{k_m}$ and $Q_{k_m}$ be the projection of $X$ onto $X^U_{k_m}$ and $ X^S_{k_m}$ respectively, that is $X^U_{k_m}=P_{k_m}X$, $X^S_{k_m}=(I-P_{k_m})X=Q_{k_m}X$. It follows from the definition $P_{k_m}$ and $Q_{k_m}$ that
\begin{equation}\label{5.8a}
\begin{aligned}
\left\|U(t)Q_{k_m} x\right\| & \leq K e^{\varrho_m t}\|x\|, & & t \geq0,
\end{aligned}
\end{equation}
where $K$ and $\gamma$ are some positive constants.

By the same techniques as Theorem \ref{thm3.1}, we can obtain the following the squeezing property.
\begin{thm}\label{thm5.1}Let $P$ be  the finite dimension projection $P_{k_m}$ be defined by \eqref{5.8a}, $\varrho_{1}, \varrho_{m},\gamma$ and $K$ being defined in \eqref{5.8} and \eqref{5.8a} respectively, then we have
 \begin{equation}\label{5.12}
\left\|P \Phi(t)\varphi-P \Phi(t)\psi\right\| \leq 2e^{(L_f+\varrho_1)t}\left\|\varphi-\psi\right\|
\end{equation}
and
 \begin{equation}\label{5.13}
\begin{gathered}
\left\|(I-P) \Phi(t)\varphi-(I-P) \Phi(t)\psi\right\|\leq (Ke^{\varrho_m t}+\frac{KL_f }{\varrho_1+L_f-\varrho_m} e^{(L_f+\varrho_1)t})\left\|\varphi-\psi\right\|
\end{gathered}
\end{equation}
for any $t\geq 0$ and $\varphi, \psi\in\mathcal{M}$.
\end{thm}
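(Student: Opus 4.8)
The plan is to mimic the proof of Theorem \ref{thm3.2} essentially verbatim, under the correspondence $K_0 \mapsto 1$ and $-\gamma \mapsto \varrho_1$: here the full linear semigroup $U(t)$ decays at the dominant rate $\varrho_1<0$ (this is where $b-a<1$, hence $\varrho_1<0$, is used), and the nonlinearity enters only through $f$. First I would lift the $H$-valued formula \eqref{5.5} to an $X$-valued identity in the extended phase space $\hat{C}$ via the impulse operator $X_0$, exactly as in \eqref{3.10}. Writing $y=\varphi-\psi$ and $w_t=\Phi(t)\varphi-\Phi(t)\psi$, this reads
\begin{equation*}
w_t = U(t)y + \int_0^t U(t-s)X_0\bigl[f(u^\varphi_s)-f(u^\psi_s)\bigr]\,ds, \qquad t\geq 0.
\end{equation*}

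For the stable estimate \eqref{5.13} I would apply $I-P=Q_{k_m}$ and invoke the exponential dichotomy bound \eqref{5.8a} on the linear term, giving $\|(I-P)U(t)y\|\leq Ke^{\varrho_m t}\|y\|$, while on the Duhamel integral I would combine the decay of the full semigroup at rate $\varrho_1$ with Hypothesis A2 to reach
\begin{equation*}
\|(I-P)w_t\| \leq Ke^{\varrho_m t}\|y\| + L_f\int_0^t e^{\varrho_1(t-s)}\|(I-P)w_s\|\,ds.
\end{equation*}
Multiplying by $e^{-\varrho_1 t}$ and applying Gronwall's inequality to $v(t)=e^{-\varrho_1 t}\|(I-P)w_t\|$, then bounding $e^{(L_f+\varrho_1)t}-e^{\varrho_m t}\leq e^{(L_f+\varrho_1)t}$ (valid once $L_f+\varrho_1>\varrho_m$), reproduces the claimed coefficient $\tfrac{KL_f}{\varrho_1+L_f-\varrho_m}$.

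For the unstable estimate \eqref{5.12} I would repeat the argument of \eqref{3.26}--\eqref{3.30}: from $PU(t)y=U(t)y-(I-P)U(t)y$ and the two decay rates one gets $\|PU(t)y\|\leq(e^{\varrho_1 t}+Ke^{\varrho_m t})\|y\|\leq 2e^{\varrho_1 t}\|y\|$, after which the same Duhamel/Gronwall step with multiplier $e^{-\varrho_1 t}$ yields $\|Pw_t\|\leq 2e^{(L_f+\varrho_1)t}\|y\|$, i.e. $M_1=2$ and $\lambda_0=L_f+\varrho_1$.

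The main obstacle I anticipate is not the Gronwall bookkeeping but the rigorous justification of the bound on $(I-P)U(t-s)X_0[\cdots]$: the impulse $X_0v$ lives in the extended space $\hat{C}$ rather than in $X$, whereas the dichotomy estimate \eqref{5.8a} is stated on $X$, so one must check that the projections $P_{k_m},Q_{k_m}$ and the decay estimates extend to $\hat{C}$ and act compatibly on $X_0v$ (cf. the remark after \eqref{3.10} and \cite{CM}), and that the Lipschitz input $\|f(u^\varphi_s)-f(u^\psi_s)\|\leq L_f\|w_s\|$ can be fed into the projected integral as $\|(I-P)w_s\|$. A secondary point is confirming the full-semigroup bound $\|U(t)\phi\|\leq e^{\varrho_1 t}\|\phi\|$ with a normalization (in particular $K\leq 1$) that permits the clean constant $2$ in $\|PU(t)y\|\leq 2e^{\varrho_1 t}\|y\|$.
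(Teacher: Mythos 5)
Your proposal is correct and is essentially the paper's own argument: the paper gives no separate proof of Theorem \ref{thm5.1}, saying only that it follows ``by the same techniques as'' the squeezing property for the RFDE (Theorem \ref{thm3.2}), and your substitutions $K_0\mapsto 1$, $-\gamma\mapsto\varrho_1$, together with the $U(t)$-based variation-of-constants formula through $X_0$, the projected estimates and Gronwall, reproduce that argument exactly. The caveats you flag (feeding the Lipschitz bound $L_f\|w_s\|$ into the integral as $L_f\|(I-P)w_s\|$, the extension of the dichotomy to $\hat{C}$, and the normalization $K_0=1$, $K\leq 1$ behind the clean constant $2$) are genuine, but they are inherited verbatim from the paper's own proof of Theorem \ref{thm3.2}, which glosses over the same points.
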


By Theorem \ref{thm2.1}, we have the following results about the existence of exponential attractors of Eq. \eqref{5.1}.

\begin{thm}\label{thm5.3}
Let $k_m, \varrho_{1}, \varrho_{m},\gamma$ and $K$ be  defined in \eqref{5.8}, \eqref{5.8b} and \eqref{5.8a} respectively,  $P$ be  the finite dimension projection $P_{k_m}$ be defined by \eqref{5.8a}. Assume that the conditions of Lemma \ref{lem5.1} are satisfied. Moreover, assume   there exists $\alpha>0$ such that $\zeta:=\alpha e^{(L_f+\varrho_1)}+Ke^{\varrho_m}+\frac{KL_f }{\varrho_1+L_f-\varrho_m}e^{(L_f+\varrho_1)}<1$, , then, \eqref{5.1} admits an exponential  attractor $\mathcal{M}$ of which the fractal dimension has an upper bound
 \begin{equation}\label{3.34f}
\operatorname{dim}_f \mathcal{M}\leq \frac{k_m[\ln k_m +\ln(2+\frac{2}{\alpha })]}{-\ln \zeta}<\infty.
\end{equation}
\end{thm}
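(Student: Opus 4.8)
The plan is to verify that the hypotheses $(\mathcal{H}_1)$--$(\mathcal{H}_3)$ of Theorem~\ref{thm2.1} hold for the dynamical system $\Phi$ generated by \eqref{5.1}, and then simply invoke that theorem to obtain the exponential attractor together with the dimension bound \eqref{3.34f}. This is precisely the strategy already executed for the RFDE in Theorem~\ref{thm3.3}, so the present statement is really a matter of matching the squeezing constants of Theorem~\ref{thm5.1} to the abstract constants $M_1,M_2,M_3,\lambda_0,\lambda_1$ appearing in $(\mathcal{H}_3)$.

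First I would record that $(\mathcal{H}_1)$ and $(\mathcal{H}_2)$ are furnished directly by Lemma~\ref{lem5.1}: under the standing assumptions $a>0$, $b>0$, $b-a<1$ and $a>L_f e^{\gamma r}$, the estimate \eqref{5.6} shows that every bounded set is eventually mapped into the global attractor $\mathcal{B}$, which is compact, hence bounded, and positively invariant. The continuity of $\Phi(t):X\to X$ for each $t\ge 0$ is standard for the mild solution \eqref{5.5} and follows from the Lipschitz hypothesis on $f$ together with a Gr\"onwall argument, exactly as in the RFDE case. Thus the absorbing--set part of $(\mathcal{H}_1)$ and the invariance $(\mathcal{H}_2)$ are in place with $\mathcal{B}$ playing the role of the absorbing set.

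Next I would read off $(\mathcal{H}_3)$ from Theorem~\ref{thm5.1}. The projection $P=P_{k_m}$ onto the $k_m$-dimensional unstable--center subspace $X^U_{k_m}$ gives $\Lambda=\dim\{PX\}=k_m$. Comparing \eqref{5.12} with \eqref{2.4} yields $M_1=2$ and $\lambda_0=L_f+\varrho_1$, while comparing \eqref{5.13} with \eqref{2.5} yields $M_2=K$, $\lambda_1=\varrho_m$, and $M_3=\dfrac{KL_f}{\varrho_1+L_f-\varrho_m}$. With these identifications the quantity $\zeta$ of Theorem~\ref{thm2.1} becomes
\begin{equation*}
\zeta=\alpha e^{\lambda_0}+M_2 e^{\lambda_1}+M_3 e^{\lambda_0}
 =\alpha e^{(L_f+\varrho_1)}+K e^{\varrho_m}+\frac{KL_f}{\varrho_1+L_f-\varrho_m}\,e^{(L_f+\varrho_1)},
\end{equation*}
which is exactly the $\zeta<1$ hypothesis imposed in the statement. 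Substituting $\Lambda=k_m$ and $M_1=2$ into the abstract bound \eqref{2.34} gives
\begin{equation*}
\operatorname{dim}_f\mathcal{M}\leq \frac{\Lambda\bigl[\ln\Lambda+\ln(2+\tfrac{M_1}{\alpha})\bigr]}{-\ln\zeta}
 =\frac{k_m\bigl[\ln k_m+\ln(2+\tfrac{2}{\alpha})\bigr]}{-\ln\zeta},
\end{equation*}
which is \eqref{3.34f}. Theorem~\ref{thm2.1} then delivers the positively invariant compact set $\mathcal{M}$ attracting all bounded sets at an exponential rate.

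The only genuinely delicate point is the verification of Theorem~\ref{thm5.1} itself, i.e.\ that the abstract squeezing constants are attained on the mild formulation \eqref{5.5} rather than the variation--of--constants formula \eqref{3.10}; but this is asserted to follow ``by the same techniques as Theorem~\ref{thm3.1}'' and may be assumed. One caveat I would flag in passing is that Theorem~\ref{thm5.1} states \eqref{5.12}--\eqref{5.13} for $\varphi,\psi\in\mathcal{M}$, whereas $(\mathcal{H}_3)$ requires the squeezing estimates on the absorbing set $\mathcal{B}$; since the same Gr\"onwall computation goes through verbatim for any $\varphi,\psi\in\mathcal{B}$ (indeed for all $\varphi,\psi\in X$, the Lipschitz bound on $f$ being global), this is a harmless discrepancy and the hypotheses of Theorem~\ref{thm2.1} are met on $\mathcal{B}$. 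With $(\mathcal{H}_1)$--$(\mathcal{H}_3)$ confirmed, the conclusion is immediate.
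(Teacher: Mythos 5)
Your proposal follows exactly the paper's own (implicit) argument: the paper derives Theorem~\ref{thm5.3} by combining Lemma~\ref{lem5.1} (which supplies $(\mathcal{H}_1)$--$(\mathcal{H}_2)$) and Theorem~\ref{thm5.1} (which supplies $(\mathcal{H}_3)$ with $\Lambda=k_m$, $M_1=2$, $\lambda_0=L_f+\varrho_1$, $M_2=K$, $\lambda_1=\varrho_m$, $M_3=\frac{KL_f}{\varrho_1+L_f-\varrho_m}$) and then invoking Theorem~\ref{thm2.1}, precisely as you do. Your closing remark about the discrepancy between the set $\mathcal{M}$ in Theorem~\ref{thm5.1} and the absorbing set $\mathcal{B}$ required by $(\mathcal{H}_3)$ is a fair observation, and your resolution (the Gr\"onwall estimate is valid on all of $X$ by the global Lipschitz hypothesis) is correct.
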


Similar to Corollary \ref{cor3.1}, we have the following corollary about the existence of exponential attractor $\mathcal{M}$ in the case $\varrho_m=\varrho_1$.

\begin{cor}\label{cor5.1}
 Let $\varrho_{1}$ and $K$ be  defined in \eqref{5.8}, \eqref{5.8b} and \eqref{5.8a} respectively and   $P$ be  the finite dimension projection $P_{k_1}$ be defined by \eqref{5.8a}. Assume that the conditions of Lemma \ref{lem5.1} are satisfied $a,b,r$ are appropriately chosen such that  $k_1=1$. Moreover, assume  there exists $\alpha>0$ such that $\alpha e^{(L_f+\varrho_1)}+Ke^{\varrho_1}+K e^{(L_f+\varrho_1)}<1$,
, then, \eqref{5.1} admits an exponential  attractor $\mathcal{M}$ of which the fractal dimension has an upper bound
 \begin{equation}\label{3.34g}
\operatorname{dim}_f \mathcal{M}\leq \frac{\ln(2+\frac{2}{\alpha })}{-\ln [(\alpha+K) e^{(L_f+\varrho_1)}+Ke^{\varrho_1}]}<\infty.
\end{equation}
\end{cor}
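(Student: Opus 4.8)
The plan is to derive this statement as the special case $m=1$ of Theorem \ref{thm5.3}, in which the unstable subspace $X^U_{k_1}$ collapses to dimension one, so that $\Lambda=k_1=1$ and $\varrho_m=\varrho_1$. First I would record the squeezing estimates of Theorem \ref{thm5.1} with $m=1$. The projection bound \eqref{5.12} identifies the constants $M_1=2$ and $\lambda_0=L_f+\varrho_1$ of hypothesis $(\mathcal{H}_3)$, while the complementary bound \eqref{5.13}, read with $\varrho_m=\varrho_1$, supplies $M_2=K$ and $\lambda_1=\varrho_1$.

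The one point requiring genuine attention is the coefficient $\frac{KL_f}{\varrho_1+L_f-\varrho_m}$ in \eqref{5.13} as $\varrho_m\to\varrho_1$. Setting $\varrho_m=\varrho_1$ makes the denominator equal to $L_f$, so the coefficient reduces exactly to $\frac{KL_f}{L_f}=K$; there is no singularity, because the vanishing of the spectral gap $\varrho_1-\varrho_m$ is cancelled by the numerator. Thus $(\mathcal{H}_3)$ holds with $(M_1,M_2,M_3,\lambda_0,\lambda_1)=(2,K,K,L_f+\varrho_1,\varrho_1)$, and I would check that the underlying Gr\"{o}nwall argument of Theorem \ref{thm5.1} remains valid in this borderline regime.

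Forming the contraction quantity $\zeta$ of Theorem \ref{thm2.1} with these constants gives
\begin{equation*}
\zeta=\alpha e^{\lambda_0}+M_2 e^{\lambda_1}+M_3 e^{\lambda_0}=(\alpha+K)e^{L_f+\varrho_1}+K e^{\varrho_1},
\end{equation*}
which is precisely the quantity whose smallness is postulated in the hypothesis $\alpha e^{(L_f+\varrho_1)}+Ke^{\varrho_1}+K e^{(L_f+\varrho_1)}<1$. Since $\zeta<1$, Theorem \ref{thm2.1} (equivalently Theorem \ref{thm5.3}) applies and yields an exponential attractor $\mathcal{M}$ for $\Phi$, with the invariance, compactness and exponential-attraction properties inherited directly from that theorem.

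The dimension estimate then follows by substituting $\Lambda=k_1=1$ and $M_1=2$ into the general bound \eqref{2.34}. Because $\ln\Lambda=\ln 1=0$, the numerator $\Lambda[\ln\Lambda+\ln(2+\frac{M_1}{\alpha})]$ collapses to $\ln(2+\frac{2}{\alpha})$, while the denominator $-\ln\zeta$ becomes $-\ln[(\alpha+K)e^{(L_f+\varrho_1)}+Ke^{\varrho_1}]$, producing exactly \eqref{3.34g}. I do not expect a real obstacle here; the only subtleties worth flagging are the exact cancellation in the $M_3$ coefficient when the spectral gap closes, and the observation—already built into the hypotheses—that the regime $k_1=1$ is attainable by choosing $a,b,r$ so that the principal characteristic root $\varrho_1$ of \eqref{5.8} is simple.
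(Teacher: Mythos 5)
Your proposal is correct and follows exactly the route the paper intends (the paper gives no separate proof, merely noting the corollary is ``similar to Corollary \ref{cor3.1}''): specialize Theorem \ref{thm5.3} to $\varrho_m=\varrho_1$, $\Lambda=k_1=1$, so that the coefficient $\frac{KL_f}{\varrho_1+L_f-\varrho_m}$ in \eqref{5.13} collapses to $K$ and the bound \eqref{2.34} reduces to \eqref{3.34g}. Your explicit check that this collapse is nonsingular (the denominator becomes $L_f\neq 0$, not the vanishing spectral gap) is a worthwhile detail the paper leaves implicit.
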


\section{Conclusions}
In this paper, we established a new  framework to construct exponential attractors for infinite dimensional dynamical systems in Banach spaces with explicit fractal dimension. The constructed  exponential attractors possess explicit  fractal dimensions which do not depend on the entropy number but only depend on inner characteristics of the studied equation. The method is especially effective for functional differential equations in Banach spaces for which sate decomposition of the linear part can be adopted to prove squeezing property. Nevertheless, we only consider the autonomous case. For  nonautonomous functional differential equations that generate evolution process, especially the case when the nonlinear term $f$ depends on time, i.e. $f=f(t,u_t)$, the semigroup approach adopted in this paper may be ineffective and the problem will be much more difficult. This problem will be tackled in an upcoming paper.

Generally, random effect are omnipresent in mathematical modelings.Therefore, one anther question is, whether there are exponential attractors  with explicit fractal dimension for partial functional differential equations perturbed by random effect, i.e. the stochastic partial functional differential equations(SPFDEs)? Indeed, even under what conditions do SPFDEs generate random dynamical systems have not been perfectly tackled needless to say the state decomposition and exponential dichotomy. This problem   also deserves much effort in the future.

\noindent{\bf Acknowledgement.}
This work was jointly supported by China Postdoctoral Science Foundation (2019TQ0089), China Scholarship Council(202008430247). \\
The research of T. Caraballo has been partially supported by Spanish Ministerio de Ciencia e
Innovaci\'{o}n (MCI), Agencia Estatal de Investigaci\'{o}n (AEI), Fondo Europeo de
Desarrollo Regional (FEDER) under the project PID2021-122991NB-C21 and the Junta de Andaluc\'{i}a
and FEDER under the project P18-FR-4509.\\
This work was completed when Wenjie Hu was visiting the Universidad de Sevilla as a visiting scholar, and he would like to thank the staff in the Facultad de Matem\'{a}ticas  for their hospitality and thank the university for its excellent facilities and support during his stay.

\small

\end{document}